\RequirePackage{fix-cm}
\documentclass[pdflatex,sn-mathphys-ay]{sn-jnl}

\usepackage{xcolor}

\usepackage{graphicx,grffile}
\usepackage{amsmath,amssymb,amsthm}
\usepackage{mathtools,dsfont,centernot}
\usepackage{caption}
\usepackage{booktabs,tabularx,threeparttable,longtable,ragged2e}
\usepackage{siunitx}
\usepackage[shortlabels]{enumitem}
\usepackage{alltt}
\usepackage{float} 
\usepackage[nodisplayskipstretch]{setspace}
\usepackage[bottom]{footmisc}
\usepackage{xr} %xr,xr-hyper}

\usepackage{chngcntr}

\usepackage{mathrsfs}
\usepackage[final]{listings}
\lstdefinestyle{inlineR}{language=R,frame=none,basicstyle=\ttfamily,keywordstyle=\ttfamily,stringstyle=\ttfamily,keepspaces=true,showspaces=false,showstringspaces=false,breaklines=true,upquote=true,print,columns=fullflexible}
\newcommand{\code}{\lstinline}

\usepackage[capitalize,noabbrev]{cleveref}
\usepackage{multirow}
  \renewenvironment{thebibliography}[1]%
  {\begin{oldthebibliography}{#1}\setlength{\parskip}{0ex}\setlength{\itemsep}{0ex}}%
  {\end{oldthebibliography}}
\appto\TPTnoteSettings{\linespread{1}\footnotesize}

\newcommand{\citeposs}[1]{\citeauthor{#1}'s (\citeyear{#1})}

\crefname{conjecture}{Conjecture}{Conjectures}
\crefname{section}{Section}{Sections}
\crefname{subsection}{Section}{Sections}
\crefname{subsubsection}{Section}{Sections}
\Crefname{conjecture}{Conjecture}{Conjectures}
\Crefname{section}{Section}{Sections}
\Crefname{subsection}{Section}{Sections}
\Crefname{subsubsection}{Section}{Sections}
\crefname{appendix}{Appendix}{Appendices}
\crefname{subappendix}{Appendix}{Appendices}
\crefname{subsubappendix}{Appendix}{Appendices}
\Crefname{appendix}{Appendix}{Appendices}
\Crefname{subappendix}{Appendix}{Appendices}
\Crefname{subsubappendix}{Appendix}{Appendices}
\crefname{equation}{}{}
\Crefname{equation}{Equation}{Equations}
\crefname{enumi}{}{}
\Crefname{enumi}{}{}
\renewcommand{\theenumi}{\roman{enumi}}

\crefname{assumption}{}{}
\Crefname{assumption}{Assumption}{Assumptions}
\crefname{assumpB}{}{}
\Crefname{assumpB}{Assumption}{Assumptions}
\Crefname{method}{Method}{Methods}
\newlist{steps}{enumerate}{1}
\setlist[steps]{label=\arabic*., ref=\arabic*, itemsep=0pt}
\crefname{stepsi}{Step}{Steps}
\Crefname{stepsi}{Step}{Steps}

\newtheorem{theorem}{Theorem}[section]
\newtheorem{lemma}[theorem]{Lemma}
\newtheorem{proposition}[theorem]{Proposition}
\newtheorem{corollary}[theorem]{Corollary}

\newtheorem{method}{Method}

\Crefname{procedure}{Procedure}{Procedures}

\theoremstyle{definition}
\newtheorem{definition}{Definition}
\newtheorem{assumption}{Assumption}

\newtheorem{assumpC}{Assumption}
\newtheorem{assumpIVQR}{Assumption}

\usepackage{bm} %better bold math for vecf and matf
\newcommand{\ubar}[1]{\mkern3mu\underline{\mkern-3mu #1\mkern-3mu}\mkern3mu}
\newcommand{\matf}[1]{\ubar{\bm{#1}}} %matrix formatting
\newcommand{\vecf}[1]{\bm{#1}} %vector formatting
\newcommand{\numnornd}[1]{\num[round-mode=none,group-digits=integer]{#1}} % no rounding, but group-separator

\newcommand{\iid}{\stackrel{\mathit{iid}}{\sim}}
\newcommand{\pconv}{\xrightarrow{p}}
\newcommand{\dconv}{\xrightarrow{d}}

\DeclareMathOperator{\Var}{Var}

\newcommand{\R}{{\mathbb R}}

\DeclareMathOperator{\E}{E} %{\mathbb{E}}
\let\Pr\relax \DeclareMathOperator{\Pr}{P} %comment out if Pr desired

\DeclareMathOperator*{\plim}{plim}
\newcommand{\plimn}{\plim_{n\to\infty}}

\DeclareMathOperator{\1}{\mathds{1}}
\DeclareMathOperator{\diag}{diag}

\newcommand{\Ind}[1]{\1\{#1\}}
\newcommand{\FWER}{\textrm{FWER}}
\newcommand{\NormDist}{\mathrm{N}}

\newcommand{\UnifDist}{\textrm{Unif}}

\newcommand{\BinoDist}{\textrm{Binomial}}

\providecommand{\abs}[1]{\lvert#1\rvert}

\let\originalleft\left
\let\originalright\right
\renewcommand{\left}{\mathopen{}\mathclose\bgroup\originalleft}
\renewcommand{\right}{\aftergroup\egroup\originalright}

\newcommand{\mockalph}[1]{}  % for BibTeX sorting
\allowdisplaybreaks[3]

\begin{document}

\title{Multiple Testing of a Function's Monotonicity}

\author[1]{\fnm{Wei} \sur{Zhao}}
\author*[1]{\fnm{David M.} \sur{Kaplan}}\email{kaplandm@missouri.edu}
\affil*[1]{\orgdiv{Department of Economics}, \orgname{University of Missouri}, \orgaddress{\street{615 Locust St}, \city{Columbia}, \postcode{65211}, \state{MO}, \country{USA}}}

\abstract{%
Instead of having a single ``yes'' or ``no'' result from a test of the global null hypothesis that a function is increasing, we propose a multiple testing procedure of the function's increasingness at several points.
If the global null is rejected, then multiple testing provides more information about why.
If the global null is not rejected, then multiple testing can provide stronger evidence in favor of increasingness, by rejecting null hypotheses that the function is decreasing.
Our approach uses high-level assumptions that apply to a broad class of causal and descriptive statistical models.
By inverting the proposed multiple testing procedure that controls the familywise error rate, we also generate ``inner'' and ``outer'' confidence sets for the set of points at which the function is increasing.
With high asymptotic probability, the inner confidence set is contained within the true set, whereas the outer confidence set contains the true set.
We also improve power with stepdown and two-stage procedures.
Simulation and empirical examples illustrate the new methodology, and all code is provided.
}

\pacs[MSC Classification]{62J15} % Paired and multiple comparisons; multiple testing (...under Linear inference, regression....)

\keywords{Familywise error rate,
Inner confidence set,
Multiple testing procedure,
Outer confidence set}

\maketitle

\vspace*{-2\baselineskip}
Date: August 31, 2025\\[\baselineskip]
This version of the article has been accepted for publication, after peer review, but is not the Version of Record and does not reflect post-acceptance improvements or any corrections.
The Version of Record is available online at \url{https://doi.org/10.1007/s11749-025-00986-6}.
Use of this Accepted Version is subject to the publisher's Accepted Manuscript terms of use: \url{https://www.springernature.com/gp/open-research/policies/accepted-manuscript-terms}

\clearpage

\section{Introduction}
\label{introduction}

Unlike a global test of the single null hypothesis that a whole function is monotonically increasing, we propose a multiple testing procedure (MTP) for a function's increasingness at multiple points.
These multiple points correspond to multiple null hypotheses, for which our procedure controls the familywise error rate.
Testing where a function is decreasing is equivalent to testing where the negative of that function is increasing, so we focus on increasingness without loss of generality.

A global hypothesis test with a single null can only provide a ``yes or no'' result.
That is, it can only say either ``yes, we have enough evidence to reject the null that the entire function is increasing'' or ``no, we cannot reject the null that the entire function is increasing.''

In contrast, our MTP can provide more information by testing increasingness at several specific points.
Notationally, we refer to these points as values of ``$X$,'' but $X$ is not restricted to be a regressor; for example, it can represent quantile index values in a quantile regression.
The MTP assesses at which values of $X$ the function is increasing.
For example, imagine a function that is generally increasing except for a small decreasing segment near $X=x$.
The global null hypothesis should be rejected because it is false, but this result arguably misses the big picture that the function is mostly increasing.
Instead, the MTP can reject increasingness near $X=x$ specifically, without rejecting increasingness elsewhere, to provide a more precise assessment.

Beyond non-rejection, the MTP can provide even stronger evidence in favor of increasingness at certain points.
We know that ``non-rejection of a null'' is relatively weak evidence in favor of the null, given the possibility of a type II error (failure to reject false null), whose rate is not controlled.
However, if we reverse the original nulls from ``increasing'' to ``decreasing,'' then rejection provides strong statistical evidence in favor of increasingness at those points.
The MTP may still make an error, but it would be a type I error (rejection of true null), whose rate is controlled at the desired level, in the familywise sense.

Assessing a function's increasingness is a general statistical task with applications across many fields.
We provide a few examples here.
In finance, the below-mentioned work by \citet{RomanoWolf2013} is motivated by theories about financial returns increasing in a variety of asset or portfolio characteristics.
In public health, ``gradients'' encompass the positive or negative relationship between health and various socioeconomic variables, like the health--income gradient; for example, \citet{RidleyEtAl2020} review evidence on the causal relationship between poverty and mental health.
Generally, these gradients can be either descriptive or causal; our methods can apply to either.
In economics, theories related to information costs and rational inattention have implications for monotonicity of certain conditional probabilities, some of which have been tested experimentally; for example, see Section 5.3 of \citet{DewanNeligh2020} and Experiment 1.2 of \citet{DeanNeligh2023}, which both test for the correct choice probability increasing in the incentive level.
In climatology, \citet{FriedrichEtAl2020} test for atmospheric ethane increasing over time during some but not all periods, for which our multiple testing approach could inductively find such periods, rather than running a global test on a pre-specified period.
In ecology, a review paper by \citet{ZhangEtAl2015eco} focuses on non-monotonicity in many ecological settings, where our multiple testing approach can help gauge evidence for such non-monotonicity, including not merely its existence but specifically where in a function it occurs.

Additionally, the MTP can be useful for helping assess identifying assumptions.
For example, \citet{ManskiPepper2000} discuss partial identification of average treatment effects given various monotonicity assumptions, including monotone treatment selection combined with monotone treatment response.
This combination has a testable implication that the conditional expectation function is a weakly increasing function; see their (19) and footnote 9.
For example, in the empirical analysis of the return to schooling, the MTP results would be useful for assessing the combination of monotone treatment selection and monotone treatment response, which jointly imply the expectation of log wage increases with years of schooling.
Instead of using a global hypothesis test and abandoning whole assumptions because they are violated somewhere, the MTP can be used to find subpopulations where the assumptions are met, and then the causal analysis can be restricted to such subpopulations.
More specifically, we could either optimistically use the subpopulations where we do not reject mean log wage increasing with education, or we could more conservatively use the subpopulations where we reject that mean log wage decreases with education in favor of increasingness.
These subpopulations respectively correspond to our outer and inner confidence sets described below.

Inverting the MTP, we also construct ``inner'' and ``outer'' confidence sets.
The object of interest is the true set of points at which the function is increasing.
The inner confidence set is contained within the true set with high asymptotic probability.
Similarly, the outer confidence set contains the true set with high asymptotic probability.
The inner and outer confidence set mechanics come from \citet{Kaplan2024}, who builds on other work to develop them in the context of sets of utility functions.

Theoretically, we provide a unified framework that applies to a wide array of models.
Specifically, we assume the relevant points on the function have estimators that are jointly asymptotically normal, with a consistently estimable covariance matrix.
For example, this applies to many estimators of functions that have a causal interpretation, such as an average structural function \citep[][\S8.1]{BlundellPowell2003}, average index function \citep[][\S8]{LewbelEtAl2012}, or structural quantile function \citep[][\S3.1]{ImbensNewey2009}, which could be estimated by instrumental variables or control function methods.
This also applies to estimators of descriptive models, such as conditional mean and quantile functions.
As noted earlier, the function does not even need to be a function of a regressor, such as looking at a quantile regression slope coefficient as a function of the quantile index.
Our use of the covariance matrix allows our MTP to have asymptotically exact familywise error rate in the ``least favorable'' case of a flat function, whereas the Bonferroni approach generally is conservative.

Further, we offer two ways to improve power.
First, we use a stepdown procedure, like that of \citet{Holm1979}.
Second, we propose two-stage procedure following the approach of \citet{RomanoShaikhWolf2014}, using the least favorable null within a first-stage confidence set.
These two approaches have complementary strengths, and we show simulation results for a data-generating process where their combined power improvement exceeds the sum of their individual improvements.

The main limitation of our MTP is the use of a finite number of $X$ points.
However, this still allows $X$ to be continuous, as long as the number of evaluations points is finite, for example by using deciles.
However, with continuous $X$, the optimal choice of evaluation points remains an open question.
Naturally, the $X$ variable can also be discrete, or even ordinal, in which case potentially all possible values can be used.

Although we are not aware of any existing multiple testing methodology to assess monotonicity, there are related literatures on both monotonicity testing and multiple testing.
In the special case of a conditional mean function, some related papers propose tests of ``regression monotonicity'' with a single null.
For example, \citet{GhosalEtAl2000} and \citet{Chetverikov2019} have the single null ``$H_0\colon m(\cdot)$ is an increasing function'' for the conditional mean function $m(x)=\E(Y\mid X=x)$.
\Citet{HallHeckman2000} design a test statistic for the single null ``$H_0\colon m(\cdot)$ is nondecreasing on the interval $\mathscr{I}$'' without requiring computation of a curve estimator.
\Citet{RomanoWolf2013} also test a single null, the negation of the alternative hypothesis that financial returns are strictly increasing in a given characteristic; see their (2.11).
Alternatively, \citet{KostyshakLuo2021} propose a ``partial monotonicity parameter'' that measures the proportion of the population for which an increase in $X$ is associated with an increase in $Y$, complementing our methods that focus on \emph{where} in the domain the function is increasing.
On the multiple testing side, the general strategy for our basic MTP is similar to that of \citet{WuKaplan2025a}, who instead consider stochastic monotonicity with ordinal or discrete outcomes.
Besides the difference in application to stochastic monotonicity instead of function monotonicity, which entails different equations and formulas (like our \cref{supp_eqn:Zhat-asy-dist} vs.\ their Lemma 1), they do not provide refinements to improve power like in our \cref{sec:power}, and their approach with continuous outcomes uses a different approach.
There is also a literature on testing moment inequalities in general, some of which we apply here; for example, see Section 4 of the survey by \citet{CanayShaikh2017}, as well as the work of  \citet{RomanoShaikhWolf2014} that we adapt to our setting.

\paragraph{Paper structure}
\Cref{sec:MTP,sec:CS} describe new methodology and provide formal theoretical results.
\Cref{sec:emp-CEF,sec:sim} contain empirical and simulation results, respectively.
\Cref{sec:power} provides two additional methods to improve power.
In the supplemental appendix, \cref{supp_sec:app-ext} shows how our methodology can apply to instrumental variables quantile regression as well as functional coefficient models, and \cref{supp_sec:app-proofs} collects proofs.

\paragraph{Notation and abbreviations}
Random and non-random vectors are respectively typeset as, e.g., $\vecf{X}$ and $\vecf{x}$, 
while random and non-random scalars are typeset as $X$ and $x$, 
and random and non-random matrices as $\matf{X}$ and $\matf{x}$.
Acronyms used include those for 
conditional expectation function (CEF), 
confidence set (CS), 
familywise error rate (FWER), 
multiple testing procedure (MTP), 
pointwise rejection probability (PRP), 
and 
two-stage least squares (2SLS).

\section{Multiple Testing}
\label{sec:MTP}

This section presents our MTP and establishes its strong control of asymptotic FWER.

\subsection{Setting}

Consider learning whether or not a function $m(\cdot)$ is increasing.
This $m(\cdot)$ is a function of a scalar $x$, but the underlying model may include interactions between $X$ and other observed and unobserved variables, whose realizations are fixed when defining $m(\cdot)$.

We consider testing at $h$ different values of $X$.
These points are not necessarily the full support of $X$; for example, if $X$ is continuous, we could test at deciles of $X$.
For notational simplicity, these points are written as $x\in \mathcal{X}= \{1,\ldots,h \}$, but more generally $x=j$ can be interpreted as $x=x_j$ for arbitrary $\mathcal{X}=\{x_1,\ldots,x_h\}$.

The MTP decides whether or not to reject each hypothesis under consideration.
Specifically, the MTP tests the following $h-1$ nulls $H_{0x}$ simultaneously.
Defining $d_x\equiv m(x)-m(x+1)$,
\begin{equation}
\label{eqn:H0x}
\begin{split}
& H_{01}\colon d_1 \le 0\quad  \textrm{(the function $m(\cdot)$ is increasing from $x=1$ to $2$)}\\
& H_{02}\colon d_2 \le 0\quad  \textrm{(the function $m(\cdot)$ is increasing from $x=2$ to $3$)}\\
&   ~~\vdots\\
&H_{0,h-1}\colon d_{h-1} \le 0\quad  \textrm{(the function $m(\cdot)$ is increasing from $x=h-1$ to $h$)} .
\end{split}
\end{equation}
The MTP makes $h-1$ decisions, with $2^{h-1}$ possible outcomes.

In \cref{sec:ex-CEF}, we introduce a running example.

% \subsubsection{Example: two-stage least squares (2SLS)}
% \label{sec:ex-2SLS}

% In this example, suppose the causal model is $Y=\beta_0+\beta_1 X+\beta_2 X^2+ \vecf{\beta}_3' \vecf{W}+U$, where $X$ is a scalar endogenous variable of interest and $\vecf{W}$ is a vector of exogenous controls.
% Our function of interest is the contribution of $X$: $m(x)\equiv\beta_1 x + \beta_2 x^2$.
% Given $x\in\mathcal{X}=\{1,2,\ldots,h\}$,
% \begin{equation*}
% \begin{aligned}
% &m(1)=\beta_1 +\beta_2, \;m(2)=2 \beta_1 +2^2 \beta_2, \;\dots, \;m(h)=h \beta_1 +h^2 \beta_2,\\
% &H_{01}\colon d_1=m(1)-m(2)=-\beta_1-3\beta_2\le0,\\
% &H_{02}\colon d_2=-\beta_1-5\beta_2\le0,\\
% & ~~\vdots\\
% &H_{0,h-1}\colon d_{h-1}=-\beta_1-(2h-1)\beta_2 \le 0 .
% \end{aligned}
% \end{equation*}
% More generally than $x\in\{1,\ldots,h\}$, we could compute $m(x)$ at any $x_1<\cdots<x_h$ and write the nulls as
% \begin{equation*}
% H_{0j}\colon d_j\le0
% \textrm{ for }j=1,\ldots,h-1
% ,\quad
% d_j \equiv m(x_j)-m(x_{j+1})
%     = (x_j-x_{j+1})\beta_1 + (x_j^2-x_{j+1}^2)\beta_2
% .
% \end{equation*}
% % 
% The MTP tests these $h-1$ nulls simultaneously.
% The results help us learn whether the function $m(\cdot)$ is increasing or not with respect to $X$ at each point.

\subsubsection{Example: conditional expectation function (CEF)}
\label{sec:ex-CEF}

% In this example, d
Define the conditional expectation function (CEF) as $m(x)\equiv\E(Y\mid X=x)$, $x\in \mathcal{X}=\{1,2,\dots,h\}$.
The null hypotheses are
\begin{equation*}
\begin{aligned}
&H_{01}\colon d_1=m(1)-m(2)=\E(Y\mid X=1)-\E(Y\mid X=2)\le0,\\
&H_{02}\colon d_2=\E(Y\mid X=2)-\E(Y\mid X=3)\le0,\\
& ~~\vdots\\
&H_{0,h-1}\colon d_{h-1}=\E(Y\mid X=h-1)-\E(Y\mid X=h)\le0.
\end{aligned}
\end{equation*}
Again, more generally, we could use any $x_1<\cdots<x_h$ and write the nulls as
\begin{equation*}
\begin{split}
H_{0j}\colon d_j&\le0
\textrm{ for }j=1,\ldots,h-1
,\\%\quad
&d_j \equiv m(x_j)-m(x_{j+1})
    = \E(Y\mid X=x_j) - \E(Y\mid X=x_{j+1})
.
\end{split}
\end{equation*}
The MTP tests these $h-1$ nulls simultaneously.
The results help us learn whether the CEF is increasing or not at each point.

\subsection{Benefits of multiple testing for monotonicity}
\label{sec:MTP-benefits}

There are two main benefits of multiple testing in this context.

First, simply knowing that the global hypothesis test rejected or not does not have as much information as the MTP results.
The global hypothesis test only says whether the whole function $m(\cdot)$ is increasing or not.
The MTP further tells us in which region it is increasing or not.
For example, the global hypothesis test result ``reject $H_0$'' means that the function $m(\cdot)$ is not increasing everywhere in the domain, but that does not provide any information about where the increasing trend is broken.
In contrast, even the simple MTP result ``reject $H_{01}$ but not $H_{02}$'' not only tells us that the function $m(\cdot)$ is not increasing overall, but specifically says there is enough evidence to reject that it increases from $m(1)$ to $m(2)$.

Second, the MTP also can provide strong evidence in favor of $m(\cdot)$ increasing at certain points, by switching the direction of the null hypothesis.
In the global hypothesis test, non-rejection of $H_0\colon m(\cdot)\textrm{ is increasing everywhere}$ is relatively weak evidence in favor of increasingness, and in practice it is rare to have the strength of evidence required to reject the reversed null ``$H_0^*\colon m(\cdot)\textrm{ is not increasing everywhere}$.
Instead, with multiple testing of the reversed nulls
\begin{equation*}
H_{0x}^*\colon d_x\ge0\textrm{ for }x\in\mathcal{X} ,
\end{equation*}
the MTP can provide strong evidence in favor of $m(\cdot)$ increasing at certain points, even if there is not strong evidence at every single point.

\subsection{Assumptions}
\label{sec: assumption}

\Cref{a:asy-normal} is maintained throughout.

\begin{assumption}
\label{a:asy-normal}
The estimator $\hat{\vecf{m}}\equiv(\hat{m}(1),\ldots,\hat{m}(h))'$ is asymptotically normal: given true value $\vecf{m}\equiv(m(1),\ldots,m(h))'$,
\begin{equation*}
% \label{eqn:asy distribution 1}
\sqrt{n}(\hat{\vecf{m}}-\vecf{m})
\dconv  \NormDist(\vecf{0},\matf{V}^a) ,
\end{equation*}
where positive definite matrix $\matf{V}^a$ can be estimated consistently, $\hat{\matf{V}}{}^a\pconv\matf{V}^a$.
\end{assumption}

\Cref{a:asy-normal} has a couple notational simplicities that immediately generalize.
First, the specific $\sqrt{n}$ convergence rate does not affect any proof and can be replaced by $n^r$ for some $r>0$.
For example, this allows nonparametric estimators, where $\vecf{m}$ is a finite-dimensional functional of some underlying population function, but its estimator $\hat{\vecf{m}}$ need not have a $\sqrt{n}$ rate, like if $\vecf{m}$ is the evaluation functional (points on the underlying function); for more discussion and technical results, see Section 3.4 of \citet{Chen2007}.
Second, recall that $m(j)$ can be interpreted as $m(x_j)$, in which case \Cref{a:asy-normal} refers to asymptotic normality of $\hat{\vecf{m}}\equiv(\hat{m}(x_1),\ldots,\hat{m}(x_h))'$ for arbitrary $x_1<\cdots<x_h$.

\Cref{a:asy-normal} can allow for time series data.
In standard cases, the key is to use a covariance matrix estimator that is robust to dependence, such as the class of heteroskedasticity and autocorrelation consistent (HAC) estimators proposed and studied by \citet{NeweyWest1987} and \citet{Andrews1991HAC} and implemented in the R package \code{sandwich} \citep{R.sandwich}.
For example, to estimate the CEF $\E(Y\mid X=x)$ with discrete or ordinal $X$, we can regress $Y$ on indicator variables for each $X$ value and no constant, and given assumptions on finite moments, stationarity, dependence, and the kernel function and lag length, Theorem 1(a) of \citet{Andrews1991HAC} establishes consistency of this class of covariance matrix estimators.
Conversely, of course, if there is nonstationarity or dependence is too strong, then our \Cref{a:asy-normal} does not hold.
For example, with a first-order autoregressive AR(1) process $y_{t}=\phi y_{t-1}+\varepsilon_{t}$, where $\varepsilon_{t}$ is white noise with variance $\sigma^2$, the OLS estimator $\hat{\phi}$ is not asymptotically normal if there is a unit root, $\phi=1$.
An interesting special case is when we wish to learn if the mean of a time series is increasing over time, i.e., monotonicity of $\E(Y_t)$ in $t$.
We can partition the time series sample $\{Y_t\}_{t=1}^{T}$ into $h$ blocks of consecutive observations and compute the sample mean of each block; equivalently, we can regress $Y_t$ on the $h$ block indicator variables and no constant, the corresponding estimated coefficients of which are $\hat{m}(j)$ for $j=1,\ldots,h$.
Asymptotically, if we consider a fixed number of observations per block and thus growing number of blocks $h$, then generally \Cref{a:asy-normal} does not hold; for example, the block averages $\hat{m}(j)$ are not normally distributed, unless the $Y_t$ are Gaussian already.
Alternatively, if we have a fixed number of blocks $h$ and growing number of observations per block, then \Cref{a:asy-normal} may indeed hold.
In practice, \citet{IbragimovMueller2010} emphasize that we should think about which asymptotic approximation seems more appropriate for our particular sample, given the number of observations per block, for example.
They further consider when blocks are large enough that they are approximately independent, so that cross-block correlations are negligible.
This also depends on the empirical setting; for example, a climate analysis may have significant dependence across decades, whereas stock market returns may have negligible serial correlation at even a daily sampling frequency.

Instead of \Cref{a:asy-normal}, we could make the high-level assumption that the asymptotic distribution of $n^r(\hat{\vecf{m}}-\vecf{m})$ exists and can be consistently estimated by a bootstrap or subsampling.
In that case, the critical value can be computed as in \cref{sec:cv-bootstrap}, which can also provide some finite-sample improvement even under \Cref{a:asy-normal}.
Thus, our approach can be applied in certain non-normal cases, too.

\Cref{a:asy-normal} is a relatively high-level assumption that holds in a wide variety of settings.
Below, we continue our example from \cref{sec:ex-CEF} to show how to establish this high-level assumption from lower-level assumptions.

\subsubsection{Example: CEF (continued)}   

Continuing from \cref{sec:ex-CEF}, in the CEF example of $m(x)=\E(Y\mid X=x)$ for $x\in\{1,\ldots,h\}$, the following low-level assumptions are sufficient for \Cref{a:asy-normal}.

\begin{assumpC}
\label{a:CEF}
The subsample sizes are $n_x\equiv\sum_{i=1}^{n}\Ind{X_i=x}$, and they are fixed, with iid sampling from the respective conditional distributions.
For example, there are $n_1$ iid draws from the conditional distribution of $Y$ given $X=1$, $n_2$ iid draws from the conditional distribution of $Y$ given $X=2$, and generally $n_x$ iid draws from the conditional distribution of $Y$ given $X=x$.
Also, $n_1/n_x\to\gamma_x\in(0,\infty)$ for all $x \in \mathcal{X}$.
\end{assumpC}

Under \Cref{a:CEF}, using the central limit theorem and continuous mapping theorem \citep[e.g.,][Prop.\ 2.27 and Thm.\ 2.3]{vanderVaart1998},
the asymptotic distribution of $\sqrt{n_1} (\hat{\vecf{m}}-\vecf{m})$ is
\begin{equation}
\begin{aligned}
\label{eqn:CEF-mhat-asy-dist}
\sqrt{n_1} (\hat{\vecf{m}}-\vecf{m})
\dconv  \NormDist(\vecf{0},\matf{V}^a)
\end{aligned}
\end{equation}
as in \Cref{a:asy-normal}, where $\vecf{0}\equiv(0,\dots,0)'$, $V^a_{(xj)}= \gamma_x v_x \Ind{x=j}$ represents the row $x$, column $j$ element of diagonal matrix $\matf{V}^a$ for $x,j\in\mathcal{X}$, and $v_x\equiv\Var(Y\mid X=x)$.

\subsection{Multiple testing procedure}
\label{sec:plain-MTP}

Below, we describe our MTP and state its asymptotic FWER control.

Our MTP controls the ``overall type I error rate'' known as the familywise error rate (FWER).
This is one way to quantify the false positive rate that we want to control when looking across a family of hypotheses.
There are several alternatives to FWER, like $k$-FWER and false discovery proportion \citep{LehmannRomano2005fwer}, as well as the false discovery rate \citep{BenjaminiHochberg1995}, but such are left to future work.
One benefit of FWER control is that it justifies inversion of our MTP into confidence sets as in \cref{sec:CS}.
\Cref{def:FWER,def:FWER-control} follow \citet[\S9.1, p.\ 407]{LehmannRomano2022text}, jointly defining our MTP's desired property of strong control of asymptotic FWER.

\begin{definition}[familywise error rate]
\label{def:FWER}
For a family of null hypotheses $H_{0x}$ indexed by $x$, let $\mathcal{T}\equiv\{x:H_{0x}\textrm{ is true}\}$ be the set of indices of true hypotheses.
The FWER is the probability of rejecting any true null hypothesis.
Mathematically,
\begin{equation}
\label{eqn:FWER}
\FWER \equiv
\Pr(\text{reject any }H_{0x}\text{ with }x \in \mathcal{T}) .
\end{equation}
\end{definition}

\begin{definition}[strong control of FWER]
\label{def:FWER-control}
Strong control of FWER requires $\FWER\le\alpha$ for any combination of true and false $H_{0x}$.
Strong control of asymptotic FWER instead requires $\FWER\le\alpha+o(1)$.
\end{definition}

\begin{method}
\label{meth:plain-MTP}
Given the $H_{0x}\colon d_x\le0$ in \cref{eqn:H0x}, the MTP rejects $H_{0x}$ when $\hat{t}_x>c_\alpha$, where the $t$-statistics are $\hat{t}_x=\hat{d}_x/\hat{s}_x$, the estimated standard errors $\hat{s}_x$ are the square roots of the diagonal elements of $\hat{\matf{V}}{}^b/n$, where $\hat{\matf{V}}{}^b$ is a consistent estimator for the $\matf{V}^b$ defined in \cref{supp_eqn:dhat-asy-dist}, and $c_\alpha$ is the critical value.
Specifically, $c_\alpha$ is the $(1-\alpha)$-quantile of the asymptotic distribution of $\hat{Z}^*$, which is the maximum of $h-1$ correlated random variables $\hat{Z}_x$:
\begin{equation*}
\hat{Z}^*\equiv\max_{x\in\{1,2,\dots,h-1\}} \hat{Z}_x
,\quad
\hat{Z}_x \equiv \frac{\hat{d}_x-d_x }{\hat{s}_x} \textrm{ for }x\in\{1,2,\dots,h-1\}
,
\end{equation*}
and the asymptotic joint normal distribution of the $\hat{Z}_x$ is in \cref{res:asy-Zhat}.
Details of simulating $c_\alpha$ are in \cref{sec:cv-sim,sec:cv-bootstrap}.
To test $H_{0x}^*\colon d_x\ge0$, simply run the above MTP for $H_{0x}\colon d_x\le0$ after replacing $Y$ with $-Y$ in the data.
\end{method}

\begin{lemma}
\label{res:asy-Zhat}
Under \Cref{a:asy-normal}, the asymptotic distribution of $\hat{\vecf{Z}}\equiv ( \hat{Z}_1, \dots, \hat{Z}_{h-1} )$ is multivariate normal with mean zero and covariance matrix $\matf{\Sigma}$ defined in \cref{supp_eqn:Zhat-asy-dist}:  $\hat{\vecf{Z}} \dconv \NormDist (\vecf{0},\matf{\Sigma})$. 
\end{lemma}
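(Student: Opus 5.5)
We write $\hat{\vecf{d}}\equiv(\hat d_1,\ldots,\hat d_{h-1})'$ as a linear transformation of $\hat{\vecf{m}}$ and then studentize. Let $\matf{A}$ be the $(h-1)\times h$ differencing matrix with row $x$ equal to $\vecf{e}_x'-\vecf{e}_{x+1}'$, so that $\vecf{d}=\matf{A}\vecf{m}$ and $\hat{\vecf{d}}=\matf{A}\hat{\vecf{m}}$. By \Cref{a:asy-normal} and the continuous mapping theorem (a linear map is continuous),
\begin{equation*}
\sqrt{n}(\hat{\vecf{d}}-\vecf{d})=\matf{A}\sqrt{n}(\hat{\vecf{m}}-\vecf{m})\dconv\NormDist(\vecf{0},\matf{V}^b),\qquad \matf{V}^b\equiv\matf{A}\matf{V}^a\matf{A}' .
\end{equation*}
This should be the content of \cref{supp_eqn:dhat-asy-dist}. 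Since $\matf{V}^a$ is positive definite and $\matf{A}$ has full row rank $h-1$ (it is upper bidiagonal with ones on its leading $(h-1)\times(h-1)$ diagonal block), $\matf{V}^b$ is positive definite. In particular, its diagonal entries $V^b_{(xx)}$ are strictly positive.

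Next we handle the standard errors. The natural estimator is $\hat{\matf{V}}{}^b=\matf{A}\hat{\matf{V}}{}^a\matf{A}'$, which is consistent for $\matf{V}^b$ by \Cref{a:asy-normal} and continuous mapping; the method only requires some consistent $\hat{\matf{V}}{}^b$ anyway. Then $\sqrt{n}\,\hat s_x=(\hat V^b_{(xx)})^{1/2}\pconv (V^b_{(xx)})^{1/2}>0$ by continuity of the square root. Let $\hat{\matf{D}}\equiv\diag\bigl((\hat V^b_{(11)})^{-1/2},\ldots,(\hat V^b_{(h-1,h-1)})^{-1/2}\bigr)$ and let $\matf{D}$ be its probability limit, which is well defined because the diagonal entries are bounded away from zero. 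Because the factors of $\sqrt{n}$ cancel,
\begin{equation*}
\hat{\vecf{Z}}=\hat{\matf{D}}\,\sqrt{n}(\hat{\vecf{d}}-\vecf{d}).
\end{equation*}

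Finally, Slutsky's lemma (a matrix converging in probability to a constant, times a vector converging in distribution) gives $\hat{\vecf{Z}}\dconv\matf{D}\,\NormDist(\vecf{0},\matf{V}^b)=\NormDist(\vecf{0},\matf{\Sigma})$ with
\begin{equation*}
\matf{\Sigma}\equiv\matf{D}\matf{V}^b\matf{D}.
\end{equation*}
This is the correlation matrix of $\matf{V}^b$: its entries are $\Sigma_{(xj)}=V^b_{(xj)}/(V^b_{(xx)}V^b_{(jj)})^{1/2}$. Written elementwise, $V^b_{(xj)}=V^a_{(xj)}-V^a_{(x,j+1)}-V^a_{(x+1,j)}+V^a_{(x+1,j+1)}$, which should match \cref{supp_eqn:Zhat-asy-dist}.

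The argument is routine. The only step needing care is ensuring $V^b_{(xx)}>0$, so that inverting $\hat s_x$ is a continuous operation at the limit. We get this from the full row rank of $\matf{A}$ together with positive definiteness of $\matf{V}^a$. The general rate $n^r$ goes through identically.
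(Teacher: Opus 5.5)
Your proof is correct and follows essentially the same route as the paper's. Your differencing matrix is the paper's $\matf{G}'$ (the paper's delta-method step is exactly your continuous-mapping step for this linear map), and your $\matf{D}$ is the paper's diagonal normalizer $\matf{A}$, giving the same $\matf{\Sigma}=\matf{A}\matf{G}'\matf{V}^a\matf{G}\matf{A}'$; your explicit check that $V^b_{(xx)}>0$, via the full row rank of the differencing matrix, is a detail the paper leaves implicit.
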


\begin{theorem}
\label{res:plain-MTP}
Under \Cref{a:asy-normal}, \cref{meth:plain-MTP} has strong control of asymptotic FWER.
\end{theorem}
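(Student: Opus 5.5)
The plan is the standard single-step max-$t$ argument: on any true null, the $t$-statistic is bounded above by its centered version, and the maximum of the centered statistics over all indices converges to the law whose $(1-\alpha)$-quantile is $c_\alpha$.

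Fix an arbitrary configuration of true and false nulls, and let $\mathcal{T}=\{x: d_x\le 0\}$ as in \cref{def:FWER}. If $\mathcal{T}=\emptyset$, the FWER is zero and there is nothing to prove, so assume $\mathcal{T}\neq\emptyset$. The FWER is $\Pr(\exists x\in\mathcal{T}\colon \hat t_x>c_\alpha)$.

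The first step is a deterministic bound using that $d_x\le 0$ for every $x\in\mathcal{T}$ and $\hat s_x>0$:
\begin{equation*}
\hat t_x=\frac{\hat d_x}{\hat s_x}=\hat Z_x+\frac{d_x}{\hat s_x}\le \hat Z_x .
\end{equation*}
Hence
\begin{equation*}
\FWER\le \Pr\Bigl(\max_{x\in\mathcal{T}}\hat Z_x>c_\alpha\Bigr)\le \Pr\Bigl(\max_{x\in\{1,\dots,h-1\}}\hat Z_x>c_\alpha\Bigr)=\Pr(\hat Z^*>c_\alpha).
\end{equation*}
We need $\hat s_x>0$ with probability approaching one. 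This holds because $\matf{V}^a$ is positive definite and $\vecf{d}=\matf{A}\vecf{m}$ for a full-row-rank differencing matrix $\matf{A}$. Therefore $\matf{V}^b=\matf{A}\matf{V}^a\matf{A}'$ is positive definite, and $n\hat s_x^2\pconv V^b_{(xx)}>0$.

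The second step is the limit. By \cref{res:asy-Zhat}, $\hat{\vecf Z}\dconv \NormDist(\vecf 0,\matf\Sigma)$. Since the max is continuous, the continuous mapping theorem gives $\hat Z^*\dconv Z^*\equiv\max_x Z_x$ with $\vecf Z\sim\NormDist(\vecf 0,\matf\Sigma)$. The distribution of $Z^*$ is continuous: it is the max of finitely many normals, each with unit variance, and $\Pr(Z^*=c)\le\sum_x\Pr(Z_x=c)=0$. Thus every point is a continuity point, and by the Portmanteau lemma
\begin{equation*}
\Pr(\hat Z^*>c_\alpha)\to \Pr(Z^*>c_\alpha)=\alpha,
\end{equation*}
with equality holding by the definition of $c_\alpha$ as the $(1-\alpha)$-quantile of a continuous distribution. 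Combining the two steps gives $\FWER\le\alpha+o(1)$. The configuration of true and false nulls was arbitrary, so this is strong control in the sense of \cref{def:FWER-control}.

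The main subtlety is that in practice $c_\alpha$ is replaced by a simulated or bootstrapped estimate $\hat c_\alpha$ computed from $\hat{\matf\Sigma}$ (\cref{sec:cv-sim,sec:cv-bootstrap}). To handle this, I would show $\hat{\matf\Sigma}\pconv\matf\Sigma$, which follows from $\hat{\matf V}{}^a\pconv\matf V^a$ and continuity. The quantile map is continuous in $\matf\Sigma$ at positive definite $\matf\Sigma$, because the cdf of $Z^*$ is continuous and strictly increasing at $c_\alpha$. Hence $\hat c_\alpha\pconv c_\alpha$, and by Slutsky $\hat Z^*-\hat c_\alpha\dconv Z^*-c_\alpha$, so the same limit $\alpha$ follows.
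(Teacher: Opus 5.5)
Your proposal is correct and follows the paper's proof essentially line for line. On true nulls you bound $\hat t_x\le\hat Z_x$, enlarge the maximum from $\mathcal{T}$ to all of $\{1,\ldots,h-1\}$, and conclude $\Pr(\hat Z^*>c_\alpha)\to\alpha$ from the definition of $c_\alpha$. Your extra details (positivity of $\hat s_x$ with probability approaching one, continuity of the law of $Z^*$ so the limit at $c_\alpha$ is valid, and consistency of the plug-in critical value from $\hat{\matf{\Sigma}}$) fill in steps the paper leaves implicit. One caveat: the $\hat{\matf{\Sigma}}$ argument covers only the normal-simulated critical value, not the bootstrapped one, which needs its own consistency assumption beyond \Cref{a:asy-normal}.
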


Although the null hypothesis is very different, \cref{meth:plain-MTP} can be compared to the global monotonicity test in Section 3.1 of \citet{RomanoWolf2013}.
(Their Section 3.3 method further provides a two-step procedure analogous to our \cref{meth:RSW}.)
The consider a more specific setting where $m(j)$ is the expected financial return of asset or portfolio $j$, and $\hat{m}(j)$ is a time series average of returns, but their approach readily generalizes to our setting.
Translating to our setting and notation, their (2.11) states their null and alternative hypotheses $H_0\colon \max_x d_x \ge 0$ vs.\ $H_1\colon \max_x d_x < 0$; that is, $H_1$ is that the function is strictly increasing, and $H_0$ is the negation of $H_1$.
Their Section 3.1 test rejects $H_0$ in favor of $H_1$ when $\max_x \hat{t}_x<z_\alpha$, the $\alpha$-quantile of the standard normal distribution, like $z_{0.05}\approx -1.64$.
This is more closely related to our MTP for the reversed nulls $H_{0x}^*\colon d_x\ge0$ from \cref{sec:MTP-benefits}, which rejects $H_{0x}^*$ when $\hat{t}_x < -c_\alpha$.
In the trivial case of $h=2$, $-c_\alpha=z_\alpha$, so both methods reject when $\hat{t}_1 < -c_\alpha$.
With $h>3$, there are a few different cases.
If $\max_x\hat{t}_x < -c_\alpha$, then the \citet{RomanoWolf2013} test rejects in favor of a strictly increasing $m(\cdot)$, and our method rejects each null in favor of $m(\cdot)$ strictly increasing between every pair of consecutive points.
If $-c_\alpha < \max_x\hat{t}_x < z_\alpha$, then their global test concludes that the function is strictly increasing everywhere, but there is at least one point where our MTP cannot reject in favor of strictly increasing.
In the most extreme case, $-c_\alpha < \min_x\hat{t}_x < \max_x\hat{t}_x < z_\alpha$, so none of the $H_{0x}^*$ are rejected by the MTP in favor of strict increasingness, yet the global test can reject the global null in favor of $m(\cdot)$ strictly increasing everywhere, which is equivalent to all $H_{0x}^*$ being false.
This illustrates that for the global null, the global test of \citet{RomanoWolf2013} has a power advantage over multiple testing.
Conversely, if $\min_x\hat{t}_x < -c_\alpha < z_\alpha < \max_x\hat{t}_x$, then their global test cannot reject in favor of increasingness, but our MTP can reject in favor of increasingness at certain $x$ values.
In one extreme example, imagine $\hat{t}_1=0$ but $\hat{t}_x < -c_\alpha$ for all $x>1$: then our MTP rejects in favor of $m(\cdot)$ strictly increasing over all $x\ge2$, but the global test merely fails to reject the global null of not-everywhere-increasing.%
\footnote{Similar conclusions should apply to the analogous comparison of the global test of \citet{DavidsonDuclos2013} taking restricted first-order stochastic dominance as the alternative hypothesis with the MTP of \citet{GoldmanKaplan2018c}, where instead of $H_{0x}\colon m(x)-m(x+1)\le0$ over discrete $x$ they have $H_{0x}\colon F_1(x)-F_2(x)\le0$ over $x\in\R$ for CDFs $F_1(\cdot)$ and $F_2(\cdot)$, among other variations.}
So, the global test and multiple testing have complementary strengths in power.

The above begs the question: can the global test be combined with the closure method to make a higher-powered MTP?
The answer is no, for the following reason.
In order to reject $H_{0j}^*$ for a particular $j$, the closure method requires the global test to jointly reject every possible combination of $H_{0x}^*$ hypotheses including the particular $H_{0j}^*$.
This includes the joint test of all $H_{0x}^*$.
However, this joint test fails to reject when $\max_x\hat{t}_x > z_\alpha$, i.e., when even a single $t$-statistic is higher than the $z_\alpha$ critical value.
Thus, a necessary condition for \emph{any} $H_{0x}^*$ to be rejected by this closure method is for $\max_x\hat{t}_x < z_\alpha$, so the MTP reduces to ``reject all $H_{0x}^*$'' when $\max_x\hat{t}_x<z_\alpha$, and ``do not reject any $H_{0x}^*$'' otherwise.
Note that this inability to leverage the closure method is due to the increasing-everywhere being the alternative hypothesis; if it were the null hypothesis, and the joint test statistic is the minimum of the individual $t$-statistics, then applying the closure method simply reproduces our MTP in \cref{meth:plain-MTP}.

\subsubsection{Critical value: multivariate normal simulation}
\label{sec:cv-sim}

The critical value $c_\alpha$ must be simulated because the analytic formulation of the asymptotic distribution of $\hat{Z}^*$ is intractable.
\Cref{meth:plain-MTP} says $c_\alpha$ is the $(1-\alpha)$-quantile of the asymptotic distribution of $\hat{Z}^*$, which is a distribution of the maximum of $h-1$ correlated normal random variables with mean zero and covariance matrix $\matf{\Sigma}$.
\Citet{NadarajahKotz2008} derive the exact distribution of the max of two Gaussian random variables, but it is very complex and in practice usually there are more than two.

To simulate $c_\alpha$, we can approximate the asymptotic distribution of $\hat{Z}^*$ using the following method.
\begin{method}[simulated normal critical value]
Run the following steps.
\begin{steps}
 \item For $b=1,\ldots,B$, draw $\vecf{Z}^{(b)}\iid\NormDist(\vecf{0},\hat{\matf{\Sigma}})$, where the covariance matrix estimator is defined just after \cref{supp_eqn:Zhat-asy-dist}.
  \item Given each $\vecf{Z}^{(b)}=(Z^{(b)}_1,\ldots,Z^{(b)}_{h-1})$, compute the maximum $Z^{(b)}_\text{max}\equiv\max_{x\in\{1,\ldots,h-1\}}Z^{(b)}_x$.
 \item The simulated critical value $\hat{\hat{c}}_{\alpha}$ is the $(1-\alpha)$-quantile among the $B$ values of $Z^{(b)}_\text{max}$.
\end{steps}
\end{method}

The first ``hat'' on $\hat{\hat{c}}_{\alpha}$ represents the use of estimated covariance matrix $\hat{\matf{\Sigma}}$, whose estimation error converges in probability to zero as the sample size increases.
The second ``hat'' is because it is simulated.
The simulation error can be made arbitrarily small by taking a large enough number of simulation replications.

To choose the number of replications $B$ in practice, we have the following suggestion.
Specifically, we solve for the smallest $B$ such that there is a high probability $1-\epsilon$ that FWER does not exceed $\alpha+\delta$, abstracting away from potential estimation error in $\hat{\matf{\Sigma}}$ and asymptotic approximation error in the normal distribution.
For example, if $\epsilon=0.02$ and $\delta=0.01$, then we want a $98\%$ probability that FWER is at most one percentage point larger than the desired $\alpha$.
Let $c_{\alpha+\delta}$ be the critical value that achieves exactly $\alpha+\delta$ FWER under the least favorable null.
The simulated critical value will be at least $c_{\alpha+\delta}$ when no more than $(1-\alpha)R$ of the $Z^{(b)}_\text{max}$ are below $c_{\alpha+\delta}$.
Because the $Z^{(b)}_\text{max}$ are iid across $b=1,\ldots,B$, and the true probability of $Z^{(b)}_\text{max}\le c_{\alpha+\delta}$ is $1-\alpha+\delta$ under the least favorable null by definition of $c_{\alpha+\delta}$, then $\sum_{b=1}^{B}\Ind{Z^{(b)}_\text{max}\le c_{\alpha+\delta}} \sim \BinoDist(B,1-\alpha+\delta)$, a binomial distribution that depends only on the user-chosen values $B$, $\alpha$, and $\delta$.
Thus, to ensure a probability of at least $1-\epsilon$ that the FWER distortion due to simulation error does not exceed $\delta$, we can solve for the smallest $B$ that satisfies
\begin{equation*}
\Pr\bigl( \BinoDist(B,1-\alpha-\delta) \le (1-\alpha)B \bigr)
\ge 1-\epsilon .
\end{equation*}
For example, using $\alpha=0.05$, $\delta=0.01$, and $\epsilon=0.02$ yields $R=2180$; decreasing to $\epsilon=0.01$ yields $R=2820$.
Our simulations with $\alpha=0.05$ use $B=10^5$, which is near the solution $B=10{,}800$ when $\delta=0.005$ and $\epsilon=0.01$.

\subsubsection{Critical value: nonparametric bootstrap}
\label{sec:cv-bootstrap}

Alternatively, the critical value can be simulated using a bootstrap or subsampling method.
To give a concrete example, here we describe a Studentized nonparametric bootstrap appropriate for iid data.
We use this method in the simulation in \cref{sec:sim}.

\begin{method}[bootstrap critical value]
Run the following steps.
\begin{steps}
 \item Let $\hat{d}_x$, $\hat{s}_x$, and $\hat{t}_x=\hat{d}_x/\hat{s}_x$ ($x=1,\ldots,h-1$) denote the estimates, standard errors, and $t$-statistics from the original sample of observations $\vecf{W}_i$ ($i=1,\ldots,n$).
 \item\label{step:cv-bs-resample} From the original sample, resample $\vecf{W}^*_i$ ($i=1,\ldots,n$) with replacement.
 \item\label{step:cv-bs-t} Using the bootstrap sample, compute estimates, standard errors, and $t$-statistics $\hat{m}^*_x$ ($x=1,\ldots,h$), $\hat{d}^*_x=\hat{m}^*_x-\hat{m}^*_{x+1}$, $\hat{s}^*_x$ (estimated standard error of $\hat{d}^*_x$), and $\hat{t}^*_x=(\hat{d}^*_x-\hat{d}_x)/\hat{s}^*_x$ ($x=1,\ldots,h-1$), as well as the maximum $t$-statistic $T^*=\max_x\hat{t}^*_x$.  (Note $\hat{t}^*_x$ is centered at $\hat{d}_x$, not zero, where in the ``bootstrap world'' $\hat{d}_x$ is the true population value; $\hat{Z}^*_x$ is perhaps better notation than $\hat{t}^*_x$ but would partially conflict with our earlier $\hat{Z}$ notation.)
 \item Repeat \cref{step:cv-bs-resample,step:cv-bs-t} $B$ times, to get $T^*_b$ for $b=1,\ldots,B$.  (In \cref{sec:sim}, we use $B=1000$.)
 \item The bootstrap critical value $\hat{c}_\alpha$ is the $(1-\alpha)$-quantile of the $T^*_b$ values.
 \item As in \cref{meth:plain-MTP}, reject $H_{0x}$ when $\hat{t}_x>\hat{c}_\alpha$, where $\hat{t}_x$ is from the original sample.
\end{steps}
\end{method}

% \subsubsection{Examples: 2SLS and CEF (continued)}
\subsubsection{Example: CEF (continued)}

% In both the 2SLS and CEF examples, we have asymptotic normality of $\sqrt{n} (\hat{\vecf{m}}-\vecf{m})$ in \cref{eqn:2SLS-mhat-asy-dist,eqn:CEF-mhat-asy-dist}, respectively, which in turn implies asymptotic normality of $\hat{\vecf{Z}}$.
In the CEF example, we have asymptotic normality of $\sqrt{n} (\hat{\vecf{m}}-\vecf{m})$ in \cref{eqn:CEF-mhat-asy-dist}, which in turn implies asymptotic normality of $\hat{\vecf{Z}}$.
By sampling vectors from a mean-zero multivariate normal distribution with the appropriate estimated covariance matrix $\hat{\matf{\Sigma}}$, the simulated critical value is the $(1-\alpha)$-quantile among the simulated maxima.

\section{Confidence Sets}
\label{sec:CS}

Each confidence set (CS) that we propose is for a set-valued parameter, specifically the set of points $x$ at which $m(x)$ is increasing:
\begin{equation}
\label{eqn:true-S}
\mathcal{S}
\equiv \{ x : d_x \le 0 \}
= \{ x : m(x)-m(x+1) \le 0 \} .
\end{equation}

\subsection{Outer confidence set}
\label{outer confidence set}

An outer CS $\hat{\mathcal{S}}_o$ is the more common type of CS, containing the true set $\mathcal{S}$ with high asymptotic probability.
Mathematically,
\begin{equation}
\label{eqn:outer-CS}
\Pr(\hat{\mathcal{S}}_o\supseteq\mathcal{S})\ge1-\alpha+o(1) .
\end{equation}
Faced with uncertainty, an outer CS must err toward being too big and including some $x$ that are not in fact in $\mathcal{S}$.
Put differently, the outer CS must have strong evidence in order to exclude a certain $x$ from $\hat{\mathcal{S}}_o$, otherwise it will violate \cref{eqn:outer-CS}.

\Cref{meth:outer-CS} constructs an outer CS by inverting the MTP in \cref{meth:plain-MTP}, similar to Method 3 of \citet{Kaplan2024} and following the same logic as the (outer) confidence set for the identified set developed by \citet[Lem.~2.1]{RomanoShaikh2010}.
Its asymptotic validity is then given by \cref{res:outer-CS}.

\begin{method}[outer CS]
\label{meth:outer-CS}
Run the MTP in \cref{meth:plain-MTP} with $H_{0x}\colon d_x \le 0$ and let the outer CS be $\hat{\mathcal{S}}_o \equiv \{x : \textrm{MTP does not reject}\ H_{0x} \}$.
\end{method}

\Cref{meth:outer-CS} says that the outer CS collects all $x$ for which the MTP does not reject the corresponding null $H_{0x}$.
Thus, the true set will be a subset of the outer CS as long as none of the true null hypotheses is rejected.
As $1-\alpha$ decreases and thus $\alpha$ increases, each $H_{0x}$ becomes more likely to be rejected, so the outer CS becomes smaller.

\begin{corollary}
\label{res:outer-CS}
Given the conclusion of \cref{res:plain-MTP} and true set $\mathcal{S}$ in \cref{eqn:true-S}, the outer CS $\hat{\mathcal{S}}_o$ in \cref{meth:outer-CS} is asymptotically valid in the sense of \cref{eqn:outer-CS}.
\end{corollary}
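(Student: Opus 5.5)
The corollary follows from the strong FWER control in \cref{res:plain-MTP} by an event-inclusion argument; no new asymptotic analysis is needed. The idea is to show that the failure event of the outer CS, $\{\hat{\mathcal{S}}_o\not\supseteq\mathcal{S}\}$, is exactly the event that the MTP rejects at least one true null. \Cref{res:plain-MTP} bounds the probability of the latter.

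The first step is to identify the true set $\mathcal{S}$ in \cref{eqn:true-S} with the index set of true hypotheses $\mathcal{T}$ in \cref{def:FWER}. Since $H_{0x}$ is $d_x\le0$, we have $x\in\mathcal{T}$ if and only if $d_x\le0$, which holds if and only if $x\in\mathcal{S}$. Hence $\mathcal{T}=\mathcal{S}$ for every data-generating process.

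The second step is to characterize the failure event. By \cref{meth:outer-CS}, $x\notin\hat{\mathcal{S}}_o$ exactly when the MTP rejects $H_{0x}$. Therefore $\hat{\mathcal{S}}_o\not\supseteq\mathcal{S}$ holds exactly when some $x\in\mathcal{S}$ has $H_{0x}$ rejected, which is the event of rejecting some true null. This gives
\begin{equation*}
\Pr(\hat{\mathcal{S}}_o\not\supseteq\mathcal{S}) = \Pr(\text{reject any }H_{0x}\text{ with }x\in\mathcal{T}) = \FWER .
\end{equation*}

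The third step invokes strong control of asymptotic FWER from \cref{def:FWER-control} and \cref{res:plain-MTP}. Because the control is strong, it holds whatever the configuration of true and false nulls, and in particular for the true configuration, whatever $\mathcal{S}$ is. So $\FWER\le\alpha+o(1)$, and taking complements gives $\Pr(\hat{\mathcal{S}}_o\supseteq\mathcal{S})\ge1-\alpha-o(1)$. Since $-o(1)$ is itself $o(1)$, this is \cref{eqn:outer-CS}.

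There is no real obstacle; the only care needed is to use \emph{strong} rather than weak control in the third step. If $\mathcal{S}$ is empty, the containment holds trivially with probability one, so that case needs no separate argument.
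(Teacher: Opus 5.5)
Your proposal is correct and follows essentially the same argument as the paper: both identify $\{\hat{\mathcal{S}}_o\supseteq\mathcal{S}\}$ with the event that no $H_{0x}$ with $x\in\mathcal{S}$ is rejected, so its probability is $1-\FWER\ge 1-\alpha+o(1)$ by \cref{res:plain-MTP}. Your explicit check that $\mathcal{T}=\mathcal{S}$ and your emphasis on \emph{strong} control simply spell out steps the paper leaves implicit.
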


\subsection{Inner confidence set}
\label{inner confidence set}

An inner CS $\hat{\mathcal{S}}_i$ is contained within the true set $\mathcal{S}$ with high asymptotic probability.
This idea seems to originate in (1) of \citet{ArmstrongShen2023}, and we generally follow the approach of \citet[][\S\S3,5.3]{Kaplan2024}.
Mathematically,
\begin{equation}
\label{eqn:inner-CS}
\Pr(\hat{\mathcal{S}}_i\subseteq\mathcal{S})
\ge 1 - \alpha + o(1) .
\end{equation}
Faced with uncertainty, the inner CS does the opposite of the outer CS, erring toward being too small and possibly omitting some $x$ that actually are in $\mathcal{S}$.
That is, the inner CS must have strong evidence in order to \emph{include} an $x$ value in $\hat{\mathcal{S}}_i$, otherwise it will violate \cref{eqn:inner-CS}.
Thus, the inner CS can be viewed as a more conservative ``estimate'' of $\mathcal{S}$ in the sense that it only includes $x$ values where there is strong evidence that $m(x)$ is increasing.

\Cref{meth:inner-CS} constructs an inner CS by inverting the MTP in \cref{meth:plain-MTP} with the null hypothesis inequality directions reversed, again following Method 3 of \citet{Kaplan2024}; its asymptotic validity is then given by \cref{res:inner-CS}.

\begin{method}[inner CS]
\label{meth:inner-CS}
Run the MTP in \cref{meth:plain-MTP} with $H_{0x}^*\colon d_x \ge 0$ (i.e., replacing $Y$ with $-Y$) and let the inner CS be $\hat{\mathcal{S}}_i \equiv \{x : \textrm{MTP rejects }H_{0x}^* \}$.
\end{method}

\Cref{meth:inner-CS} says that the inner CS collects all $x$ for which the MTP rejects the corresponding reversed null hypothesis $H_{0x}^*\colon d_x \ge 0$ in favor of increasingness at point $x$, so the inner CS equals the complement of the outer CS for $\{x:m(x)\le m(x+1)\}$, the set of points where $m(x)$ is decreasing.
Intuitively, strong evidence in favor of increasingness is the same as strong evidence against decreasingness, so the set of $x$ with strong evidence in favor of increasingness is the complement of the set of $x$ lacking strong evidence against decreasingness; that is, the inner CS for increasing points is the complement of the outer CS for decreasing points.
This symmetry is reassuring.
Regardless of whether we frame our inquiry in terms of increasingness or decreasingness, the confidence sets essentially partition the $x$ points into three groups: one with strong evidence in favor of increasingness, one with strong evidence in favor of decreasingness, and one without strong evidence in either direction.

As with the outer CS, the coverage of the inner CS is closely linked to the MTP.
Because the inner CS collects $x$ for which the MTP rejects $H_{0x}^*\colon d_x \ge 0$, the probability of any false rejection is the probability of incorrectly including an $x$ in the inner CS.
Thus, asymptotically, strong control of FWER implies correct coverage probability.
Also, as $1-\alpha$ decreases and thus $\alpha$ increases, each $H_{0x}^*$ becomes more likely to be rejected, so the inner CS becomes larger.
That is, as $\alpha$ increases, both the inner CS and outer CS approach the point estimate $\hat{\mathcal{S}}=\{x:\hat{d}_x\le0\}$, but the inner CS approaches from the inside whereas the outer CS approaches from the outside, with $\hat{\mathcal{S}}_i\subseteq\hat{\mathcal{S}}\subseteq\hat{\mathcal{S}}_o$.

\begin{corollary}
\label{res:inner-CS}
Given the conclusion of \cref{res:plain-MTP} and true set $\mathcal{S}$ in \cref{eqn:true-S}, the inner CS $\hat{\mathcal{S}}_i$ in \cref{meth:inner-CS} is asymptotically valid in the sense of \cref{eqn:inner-CS}.
\end{corollary}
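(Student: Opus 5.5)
The plan is to reduce coverage of the inner CS to FWER control of the reversed-null MTP, exactly as for the outer CS in \cref{res:outer-CS}. The key set-theoretic observation is that the event $\{\hat{\mathcal{S}}_i\not\subseteq\mathcal{S}\}$ coincides with the event that the reversed-null MTP rejects some true $H_{0x}^*$. Indeed, $\hat{\mathcal{S}}_i\not\subseteq\mathcal{S}$ means there is some $x\in\hat{\mathcal{S}}_i$ with $x\notin\mathcal{S}$.

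For such an $x$, membership in $\hat{\mathcal{S}}_i$ means that $H_{0x}^*$ was rejected. Non-membership in $\mathcal{S}$ means $d_x>0$. This implies $d_x\ge0$, so $H_{0x}^*$ is true. Hence, writing $\mathcal{T}^*\equiv\{x:d_x\ge0\}$ for the indices of true reversed nulls,
\begin{equation*}
\Pr(\hat{\mathcal{S}}_i\not\subseteq\mathcal{S})
\le \Pr(\text{reject any }H_{0x}^*\text{ with }x\in\mathcal{T}^*) .
\end{equation*}
Note the inclusion of events is one-directional. True reversed nulls with $d_x=0$ may be falsely rejected without harming inner coverage, since such $x$ are in $\mathcal{S}$. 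So only an inequality is needed, which is the convenient direction.

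Next I need to justify that the right-hand side is at most $\alpha+o(1)$. The reversed-null MTP is \cref{meth:plain-MTP} applied to data with $Y$ replaced by $-Y$. Equivalently, it is the MTP for $\tilde m\equiv-m$, with $\tilde d_x=-d_x$, and $H_{0x}^*$ becomes $\tilde d_x\le0$. \Cref{a:asy-normal} holds for $-\hat{\vecf m}$ with the same $\matf{V}^a$, since negation preserves joint normality and the covariance, and $\hat{\matf V}{}^a$ remains consistent. Therefore \cref{res:plain-MTP} applies to this transformed problem, giving strong control of asymptotic FWER for the family $\{H_{0x}^*\}$ under any configuration of true and false nulls. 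In the phrasing of the corollary (``given the conclusion of \cref{res:plain-MTP}''), this conclusion is simply invoked for the reversed family.

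Combining the two steps yields $\Pr(\hat{\mathcal{S}}_i\subseteq\mathcal{S})\ge1-\alpha+o(1)$, which is \cref{eqn:inner-CS}. The only step requiring care is the second: confirming that sign reversal leaves \cref{a:asy-normal} and the construction of $c_\alpha$ intact, so that the theorem legitimately transfers. Since $\hat Z_x$ only changes sign, the covariance matrix $\matf\Sigma$ is unchanged and $c_\alpha$ is the same. The rest is the elementary event inclusion.
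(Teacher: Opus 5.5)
Your proposal is correct and follows the paper's proof: both reduce $\Pr(\hat{\mathcal{S}}_i\subseteq\mathcal{S})$ to one minus the FWER of the reversed-null MTP, and both then invoke \cref{res:plain-MTP}. Your version is actually slightly more careful than the paper's in two places:
\begin{itemize}
\item Since $\{x:x\notin\mathcal{S}\}=\{x:d_x>0\}$ leaves out true reversed nulls with $d_x=0$, the paper's ``$=1-\FWER$'' should really read ``$\ge 1-\FWER$'', which is what you wrote.
\item You explicitly check that the $Y\mapsto -Y$ transformation preserves \Cref{a:asy-normal} and leaves $c_\alpha$ unchanged, which the paper takes for granted.
\end{itemize}
One wording fix: your opening claim that the two events ``coincide'' should say the first event ``is contained in'' the second, as you correctly state later.
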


\section{Empirical illustration: earnings and education}
\label{sec:emp-CEF}

Economists and others have long studied the relationship between earnings and years of education.
Here we consider not a causal relationship but a descriptive one, the conditional expectation function (CEF).
Further, as noted in \cref{introduction}, this CEF can also be used to assess the assumptions of \citet{ManskiPepper2000} for learning about the causal effect of education on earnings.

\subsection{Setup}
\label{sec:emp-CEF-setup}

In this example, $m(x)=\E(Y\mid X=x)$, and interest is in where log weekly income ($Y$) is increasing in years of education ($X$).
We use the dataset \code{census2000} in package \code{wooldridge} \citep{R.wooldridge}, originally provided by \citet{Wooldridge2010}.
It contains $\numnornd{29501}$ observations.
In the dataset, our $Y$ variable is \code{lweekinc} and our $X$ variable is \code{educ}.

In \citeposs{ManskiPepper2000} empirical example, they compute a uniform $95\%$ confidence band for the CEF $m(\cdot)$.
Then, they claim that the monotone treatment selection and monotone treatment response assumptions are consistent with the empirical evidence because the band includes a weakly increasing CEF.
However, such a band could include an increasing function even if the entire estimated CEF is decreasing.
The MTP can provide a stronger and more specific assessment.

\subsection{Results}

\begin{table}[htbp]
\centering
\caption{\label{tab:emp-CEF}CEF example, $\alpha=0.05$}
\sisetup{round-precision=3}
% \begin{threeparttable}
\begin{tabular}[c]{
r
S[table-format=1.2,round-precision=2]
S[table-format=5.0,round-precision=0]
S[table-format=1.2,round-precision=2]
S[table-format=-2.2,round-precision=2]
cc}
\toprule
  \multicolumn{1}{c}{$x$} & \multicolumn{1}{c}{$\hat{m}(x)$}
& \multicolumn{1}{c}{$n_x$} & \multicolumn{1}{c}{$\hat{\hat{c}}_\alpha$} 
& \multicolumn{1}{c}{$\hat{t}_x$} & \multicolumn{1}{c}{$H_{0x}\colon m(x)\le m(x+1)$} 
& \multicolumn{1}{c}{$H_{0x}^*\colon m(x)\ge m(x+1)$} \\
\midrule
 9 &    6.2586 &    374 & 2.3952 &  -1.1968 & Not reject & Not reject \\
10 &    6.3104 &    621 & 2.3952 &   1.6616 & Not reject & Not reject \\
11 &    6.2475 &    601 & 2.3952 &  -9.7814 & Not reject &     Reject \\
12 &    6.4991 &  12433 & 2.3952 & -12.0561 & Not reject &     Reject \\
13 &    6.6327 &   5424 & 2.3952 &  -1.3792 & Not reject & Not reject \\
14 &    6.6552 &   2625 & 2.3952 & -17.7031 & Not reject &     Reject \\
16 &    6.9397 &   7423 & &&&\\
\bottomrule
\end{tabular}
% \end{threeparttable}
\end{table}

\Cref{tab:emp-CEF} includes the following columns.
The $x$ is the years of education.
The $\hat{m}(x)$ column shows the corresponding CEF point estimates, and $n_x$ is the number of observations with $X_i=x$.
The $\hat{\hat{c}}_\alpha$ is the simulated critical value; again, there is just one, repeated across rows for convenience.
The $\hat{t}_x$ column shows the $t$-statistics defined in \cref{meth:plain-MTP}, with more negative values indicating stronger evidence of $m(\cdot)$ decreasing from $x$ to $x+1$, and more positive values indicating stronger evidence of $m(\cdot)$ decreasing.
The $H_{0x}\colon m(x)\le m(x+1)$ column shows the MTP results for the null hypotheses that mean income is increasing in education from $x$ to $x+1$.
Similarly, the $H_{0x}^* \colon m(x)\ge m(x+1)$ column shows the MTP results for the reversed null hypotheses that mean income is decreasing in education from $x$ to $x+1$.
The testing is done separately for each column, so FWER is controlled within each column but not across both columns simultaneously.

\Cref{tab:emp-CEF} shows the following results.
From the $H_{0x}\colon m(x)\le m(x+1)$ column, we see the MTP does not anywhere reject that mean log weekly income is increasing with education.
(The estimated function actually decreases from $x=10$ to $x=11$, but not enough for even a pointwise $t$-test to reject at a $5\%$ level.)
However, we know that ``not rejecting'' is relatively weak evidence.
Switching the direction of null hypotheses produces stronger evidence in favor of increasingness, seen in the last column of the table.
Specifically, the MTP rejected $H_{0x}^*$ in favor of increasingness for $x\in\{11,12,14\}$.
That is, the MTP tells us there is strong evidence in favor of average log weekly income increasing with education for most values above $x\ge11$.
For example, rejecting $H_{0,12}^*$ says that the mean log weekly income is statistically significantly higher for individuals with $x=13$ years of education than individuals with $x=12$ years of education; even accounting for the fact that we are making several such comparisons simultaneously, there is a statistically significant increase associated with even just one year of college.
The one exception among $x\ge11$ is that $H_{013}^*$ is not rejected, even though the relevant subsample sizes $n_{13}$ and $n_{14}$ are relatively large.
The point estimates $\hat{m}(13)=6.63$ and $\hat{m}(14)=6.66$ are too close to reject $H_{013}^*\colon m(13)\ge m(14)$.

\Cref{tab:emp-CEF} also has enough information to compute the global test of \citet{RomanoWolf2013} as well as a Bonferroni MTP.
Given the $\hat{t}_x$ values, the MTP results will be identical for any critical value between $\hat{t}_{10}=1.66$ and $\abs{\hat{t}_{11}}=9.78$, so the Bonferroni MTP gets the same results.
Similar to the simulation results in \cref{tab:sim-plain}, the Bonferroni critical value is very similar to ours in this dataset, differing by only $0.0012$.
% qnorm(1-0.05/6) = 2.3940
For the test of \citet{RomanoWolf2013}, following the discussion after our \cref{res:plain-MTP} and using $\alpha=0.05$, the test rejects in favor of the alternative that $m(\cdot)$ is increasing when $\max_x\hat{t}_x<z_\alpha=-1.64$.
In \cref{tab:emp-CEF}, $\max_x\hat{t}_x=1.66>0$, so their test does not come close to rejecting.
This is reasonable: $\hat{m}(10)>\hat{m}(11)$, so we could not possibly claim that there is strong evidence in favor of $m(\cdot)$ increasing at every $x$.
However, it also misses our more nuanced MTP results that show strong evidence of the CEF increasing at certain $x$ values that include a large proportion of individuals in the sample.

\Cref{tab:emp-CEF} can also be used to construct inner and outer confidence sets for the true set of points $\mathcal{S} \equiv \{x\colon m(x)\le m(x+1)\}$ where mean log weekly income is increasing in education.
Following \cref{meth:outer-CS,meth:inner-CS}, the outer CS $\hat{\mathcal{S}}_o$ collects all $x$ for which the MTP does not reject the null $H_{0x}$ of increasingness, and the inner CS $\hat{\mathcal{S}}_i$ collects all $x$ for which the MTP rejects the reversed null $H_{0x}^*$ of decreasingness.
\Cref{tab:emp-CEF} shows that the MTP does not reject any $H_{0x}$ and rejects $H_{0x}^*$ for $x\in\{11,12,14\}$, so the confidence sets are
\begin{equation*}
\hat{\mathcal{S}}_o= \{9,10,11,12,13,14\}
,\quad
\hat{\mathcal{S}}_i=\{11,12,14\}
.
\end{equation*}
There is a high probability of sampling a dataset in which the true $\mathcal{S}$ is ``between'' the inner and outer CSs, which here would mean $\{11,12,14\}\subseteq\mathcal{S}\subseteq\{9,10,11,12,13,14\}$, suggesting the CEF increases at least at $11$, $12$, and $14$ years, and possibly at additional values, but there is not strong enough evidence to say either way.

For the monotone treatment selection and monotone treatment response assumptions assessment in \citet{ManskiPepper2000}, the results from \cref{tab:emp-CEF} indicate strong evidence that the testable implication of an increasing CEF is satisfied for points $x=11,12,14$.
For the rest of the points, there is not enough evidence to say for sure that it is satisfied, but also not enough evidence to reject it, which would then imply rejecting one or both of their assumptions.
For an empirical analysis relying on monotone treatment response and selection, this might suggest that we use the full dataset ($\hat{\mathcal{S}}_o$) for a main analysis, but restrict to $x\in\hat{\mathcal{S}}_i=\{11,12,14\}$ for a robustness check.

\section{Simulation with \texorpdfstring{\cref{meth:plain-MTP}}{Method \ref{meth:plain-MTP}} MTP}
\label{sec:sim}

In this section, the running CEF example is used to illustrate the finite-sample properties of our proposed MTP with FWER level $\alpha=0.05$.
The critical value is computed two ways: first, simulating from the asymptotic normal distribution as in \cref{sec:cv-sim} with $10^5$ draws, and second, using a nonparametric bootstrap as in \cref{sec:cv-bootstrap} with $10^3$ draws.
We also compare with a Bonferroni critical value.
In our setting, FWER is highest under the least favorable null where the function $m(\cdot)$ is flat, so we use a DGP with $h=4$ and $m(1)=m(2)=m(3)=m(4)$.
If the MTP controls FWER in this case, then the FWER will be even lower with strict inequalities like $m(1)<m(2)<m(3)<m(4)$.
Our MTP code is in R \citep{R.core} and uses the multivariate random normal function from the MASS package \citep{R.MASS}.
Results used R version 4.5.1.

Additional simulations are in \cref{sec:power-sim} to illustrate our alternative MTPs that further improve power.

\subsection{Setup}
\label{sec:sim-setup}

We generate the data as follows.
First, we take $n_x$ observations for each value $X_i=x$ ($x=1,2,3,4$), so there are $\sum_{x=1}^{4}n_x=4n_x$ total observations.
Second, we generate the $Y_i$ given its corresponding $X_i=x$.
The conditional distribution is $Y_i\sim\NormDist(1,\sigma_x)$, where $\sigma_x^2\equiv\Var(Y\mid X=x)$ and either $\sigma_x=1$ or $\sigma_x=x$.
Normality is not the most challenging DGP but is sufficient to show patterns as the sample size increases and between our two critical value methods.
The conditional means are all $m(x)=\E(Y\mid X=x)=1$, which maximizes FWER.
That is, if FWER is controlled here, then it would be even lower with different $m(x)$.

Given the DGP, we compute the MTP properties as follows.
First, for each of $\numnornd{100000}$ simulated datasets, we run the MTP in \cref{meth:plain-MTP} with $\alpha=0.05$.
Second, we compute the FWER.
Because all the nulls are true, the FWER is the proportion of simulated datasets in which the MTP rejects at least one $H_{0x}$.
Third, we also report the minimum, median, and maximum critical values $\hat{\hat{c}}_\alpha$, among simulation replications for a particular DGP and method, along with the Bonferroni critical value $\Phi^{-1}(1-\alpha/3)=2.128$.

\subsection{Simulation results}

\Cref{tab:sim-plain} shows the following patterns related to critical values.
First, as $n_x$ increases, the ``Normal'' (\cref{sec:cv-sim}) simulated critical value becomes more ``precise'' in the sense of lower variance across replications because the covariance matrix estimator's variance decreases, but even with $n_x=10$ the range is relatively precise: $[2.120,2.138]$.
The ``Bootstrap'' critical value also becomes more precise, but not as much as the Normal.
Second, also as $n_x$ increases, the median Normal critical value does not change, whereas the median bootstrap critical value dereases.
This reflects a finite-sample advantage of the bootstrap to capture non-normal sampling distributions.
In this case, with $n_x=10$ the marginal $t$-statistic distributions have somewhat thicker-than-normal tails and thus warrant a somewhat larger critical value, which is reflected by the median Bootstrap critical value but not the median Normal.
Third, for this DGP, the Bonferroni critical value $\Phi^{-1}(1-\alpha/3)=2.128$ is nearly asymptotically exact.
The Normal critical values are very close to this, even with the small $n_x=10$, and the corresponding FWER is either the same or within $0.001$.
That is, even in this case that heavily favors Bonferroni, our MTP adapts to get nearly the same results.
(This MTP also introduces the framework used for our refined two-stage procedure in \cref{sec:RSW}.)
Fourth, the MTP critical values are all well above the one-sided $t$-test critical value $1.64$, reflecting the fact that naively running individual $\alpha=0.05$ $t$-tests on each $H_{0x}$ would fail to control the FWER at $\alpha=0.05$.

\begin{table}[htbp]
\centering
\caption{\label{tab:sim-plain}Simulation examples, $\alpha=0.05$.}
\sisetup{round-precision=3}
\begin{tabular}[c]{c
S[table-format=5.0,round-precision=0,round-mode=places]  
l c
S[table-format=1.3,round-precision=3] 
S[table-format=1.3,round-precision=3] 
S[table-format=1.3,round-precision=3] 
S[table-format=1.3,round-precision=3] 
}
\toprule
\multicolumn{1}{c}{$\sigma_x$} & \multicolumn{1}{c}{$n_x$} & 
\multicolumn{1}{c}{Method} & 
\multicolumn{1}{c}{$\hat{\hat{c}}_\alpha$ $[\min,\text{med},\max]$} & 
\multicolumn{1}{c}{\FWER} \\
\midrule
$1$ &     10 &     Normal & [2.120,2.128,2.138] & 0.069500 \\
$1$ &     10 &  Bootstrap & [1.982,2.381,3.511] & 0.039900 \\
$1$ &     10 & Bonferroni &        2.128        & 0.069400 \\
\midrule
$x$ &     10 &     Normal & [2.121,2.128,2.138] & 0.071900 \\
$x$ &     10 &  Bootstrap & [1.938,2.423,4.169] & 0.041200 \\
$x$ &     10 & Bonferroni &        2.128        & 0.071900 \\
\midrule
$1$ &    100 &     Normal & [2.124,2.127,2.135] & 0.052100 \\
$1$ &    100 &  Bootstrap & [1.925,2.146,2.385] & 0.049600 \\
$1$ &    100 & Bonferroni &        2.128        & 0.051900 \\
\midrule
$x$ &    100 &     Normal & [2.124,2.127,2.135] & 0.051000 \\
$x$ &    100 &  Bootstrap & [1.915,2.150,2.385] & 0.049000 \\
$x$ &    100 & Bonferroni &        2.128        & 0.050900 \\
\midrule
$1$ &   1000 &     Normal & [2.125,2.128,2.131] & 0.052700 \\
$1$ &   1000 & Bonferroni &        2.128        & 0.052600 \\
\midrule
$x$ &   1000 &     Normal & [2.125,2.128,2.131] & 0.054500 \\
$x$ &   1000 & Bonferroni &        2.128        & 0.054400 \\
\midrule
$1$ &  10000 &     Normal & [2.126,2.128,2.131] & 0.048400 \\
$1$ &  10000 & Bonferroni &        2.128        & 0.048300 \\
\midrule
$x$ &  10000 &     Normal & [2.126,2.128,2.131] & 0.049200 \\
$x$ &  10000 & Bonferroni &        2.128        & 0.049300 \\
% [1] "2025-08-31 17:15:02 CDT"
% Time difference of 4.689149 hours
% $1$ &     10 &     Normal & [2.120,2.128,2.138] & 0.069500 & 0.025700 & 0.021900 & 0.022100 \\
% $1$ &     10 &  Bootstrap & [1.982,2.381,3.511] & 0.039900 & 0.015300 & 0.012600 & 0.012100 \\
% $x$ &     10 &     Normal & [2.121,2.128,2.138] & 0.071900 & 0.025900 & 0.023700 & 0.022600 \\
% $x$ &     10 &  Bootstrap & [1.938,2.423,4.169] & 0.041200 & 0.016500 & 0.012900 & 0.012000 \\
% $1$ &    100 &     Normal & [2.124,2.127,2.135] & 0.052100 & 0.017900 & 0.018300 & 0.016000 \\
% $1$ &    100 &  Bootstrap & [1.925,2.146,2.385] & 0.049600 & 0.017100 & 0.017700 & 0.014900 \\
% $x$ &    100 &     Normal & [2.124,2.127,2.135] & 0.051000 & 0.016900 & 0.016800 & 0.017400 \\
% $x$ &    100 &  Bootstrap & [1.915,2.150,2.385] & 0.049000 & 0.015900 & 0.016800 & 0.016400 \\
% $1$ &   1000 &     Normal & [2.125,2.128,2.131] & 0.052700 & 0.016400 & 0.018600 & 0.018300 \\
% $x$ &   1000 &     Normal & [2.125,2.128,2.131] & 0.054500 & 0.017900 & 0.017800 & 0.019200 \\
% $1$ &  10000 &     Normal & [2.126,2.128,2.131] & 0.048400 & 0.014800 & 0.015200 & 0.018400 \\
% $x$ &  10000 &     Normal & [2.126,2.128,2.131] & 0.049200 & 0.016400 & 0.014100 & 0.018900 \\
% % Time difference of 4.922982 hours
\bottomrule
\end{tabular}
\end{table}

\Cref{tab:sim-plain} also shows patterns in the error rates.
First, with the Normal critical value, the FWER is near the nominal $\alpha=0.05$ at larger sample sizes, and even with $n_x=10$ the distortion is modest, with $7\%$ FWER.
Second, the Bonferroni error rates are nearly identical to the Normal rates in every case.
Third, compared to both Normal and Bonferroni, the Bootstrap critical value improves FWER at the smallest $n_x=10$, although even by $n_x=100$ this advantage has essentially disappeared.
(This is why we did not spend several more hours to run Bootstrap on the larger $n_x$ simulations.)
Of course, the specific $n_x$ where the advantage disappears depends on the underlying distribution, and more generally depends on the model and estimator, but generally this shows that it may be worth using bootstrap with smaller samples, but not worth the extra computation time on large samples.
As noted above, the FWER improvement seems due to the larger median Bootstrap critical value that captures some of the non-normality of the finite-sample $t$-statistic distributions.

\section{Procedures to Improve Power}
\label{sec:power}

We develop two ways to improve power without sacrificing strong control of FWER.
\Cref{sec:stepdown} describes a stepdown procedure that improves power when there are multiple false null hypotheses and at least one rejection by the original MTP in \cref{meth:plain-MTP}.
Essentially, the rejected hypotheses can be ignored when recomputing a smaller critical value that increases power against the remaining false hypotheses.
\Cref{sec:RSW} describes a two-stage procedure following the strategy of \citet{RomanoShaikhWolf2014}.
Complementing the stepdown, this two-stage procedure improves power when there are null hypotheses that are true and not binding, meaning the true $d_x$ is strictly below zero.
Roughly speaking, the procedure ``estimates'' how far from binding they are (how far below zero are those $d_x$) and lowers the critical value to account for the corresponding $t$-statistics being less likely to cause false rejections.
However, there can be DGPs for which the two-stage power is worse than the plain MTP in \cref{meth:plain-MTP}, like if all true null hypotheses are indeed binding equalities.
In contrast, by construction, the stepdown's power is always at least as high as the plain MTP's power.

\subsection{Stepdown procedure}
\label{sec:stepdown}

The idea of a stepdown procedure is from \citet{Holm1979}; see also Procedure 9.1.1 of \citet[\S9.1]{LehmannRomano2022text}.
Generally, a stepdown procedure starts by deciding whether or not to reject the hypothesis that has the largest test statistic.
If the corresponding null hypothesis is not rejected, then none of the other nulls are rejected, and the procedure stops.
If the null is rejected, then the critical value is adjusted, and the procedure moves to the next-largest test statistic.
Following this logic, \cref{meth:stepdown} describes our stepdown procedure.

\begin{method}[stepdown]
\label{meth:stepdown}
Run the following steps.
\begin{steps}
\item For iteration $i=0$, run \cref{meth:plain-MTP} with $H_{0x} \colon d_x \le 0$.
Set the iteration counter to $i=1$.
\item\label{meth:stepdown-K} Compute the set $\hat{K}^{(i)}\equiv\{x:H_{0x}\textrm{ not yet rejected}\}$, corresponding to null hypotheses not rejected in any iteration before $i$.
\item\label{meth:stepdown-cv} Compute critical value $c_{\alpha}^{\hat{K}^{(i)}}$ as the $(1-\alpha)$-quantile of the asymptotic distribution of $\max_{x \in \hat{K}^{(i)}} \hat{Z}_x$.
\item Reject any additional $H_{0x}$ for which $\hat{t}_x>c_{\alpha}^{\hat{K}^{(i)}}$.
If there are no additional rejections, or if all $H_{0x}$ have now been rejected, then stop.
Otherwise, increment $i$ by one and return to \Cref{meth:stepdown-K}.
\end{steps}
\end{method}

\Cref{prop: just stepdown} establishes the validity of the stepdown procedure in \cref{meth:stepdown}.

\begin{proposition}
\label{prop: just stepdown}
\Cref{meth:stepdown} tests $H_{0x} \colon d_x\le 0$ across $x\in\{1,2,\ldots,h-1\}$ with strong control of asymptotic \FWER\ at level $\alpha$.
\end{proposition}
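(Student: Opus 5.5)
The proof will follow the standard argument for stepdown procedures in Theorem 9.1.2 of \citet{LehmannRomano2022text}, combined with the asymptotic normality in \cref{res:asy-Zhat}. Fix any configuration of true and false nulls, and let $\mathcal{T}=\{x:d_x\le0\}$ be the indices of true nulls as in \cref{def:FWER}. If $\mathcal{T}=\emptyset$ there is nothing to prove, so assume $\mathcal{T}\ne\emptyset$. The first step is to define the event
\begin{equation*}
A\equiv\Bigl\{\max_{x\in\mathcal{T}}\hat{Z}_x\le c_\alpha^{\mathcal{T}}\Bigr\},
\end{equation*}
where $c_\alpha^{\mathcal{T}}$ is the $(1-\alpha)$-quantile of the asymptotic distribution of $\max_{x\in\mathcal{T}}\hat{Z}_x$. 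I will then show that any false rejection implies $A^c$, and that $\Pr(A^c)\to\alpha$, or at least $\limsup\Pr(A^c)\le\alpha$.

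The key deterministic step is monotonicity of the critical values: if $K\subseteq K'$, then $\max_{x\in K}\hat{Z}_x\le\max_{x\in K'}\hat{Z}_x$ pointwise, so the asymptotic quantiles satisfy $c_\alpha^{K}\le c_\alpha^{K'}$. In practice the estimated or simulated versions are used. I would handle these by noting that each $\hat{c}_\alpha^{K}\pconv c_\alpha^{K}$, and that there are only finitely many subsets $K$ (at most $2^{h-1}$). Hence, with probability approaching one, all estimated critical values lie within $\epsilon$ of their limits.

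On $A$, I will argue by induction over iterations that no true null is ever rejected. Suppose no true null has been rejected before iteration $i$. Then $\mathcal{T}\subseteq\hat{K}^{(i)}$, so $c_\alpha^{\hat{K}^{(i)}}\ge c_\alpha^{\mathcal{T}}$. For any $x\in\mathcal{T}$, since $d_x\le0$,
\begin{equation*}
\hat{t}_x=\frac{\hat{d}_x}{\hat{s}_x}\le\frac{\hat{d}_x-d_x}{\hat{s}_x}=\hat{Z}_x\le c_\alpha^{\mathcal{T}}\le c_\alpha^{\hat{K}^{(i)}},
\end{equation*}
so $x$ is not rejected at iteration $i$. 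Iteration $0$ is \cref{meth:plain-MTP} with $K=\{1,\ldots,h-1\}\supseteq\mathcal{T}$, which gives the base case. Therefore $\FWER\le\Pr(A^c)$.

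Finally, by \cref{res:asy-Zhat} and the continuous mapping theorem, $\max_{x\in\mathcal{T}}\hat{Z}_x$ converges in distribution to the maximum of a nondegenerate normal vector. That limit has a continuous CDF, because the diagonal of $\matf{\Sigma}$ is one, and so $\Pr(A^c)\to\alpha$.

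The main obstacle is the weak inequality at the boundary once estimated critical values are used. To handle it, I would intersect $A$ with the high-probability event that $\max_K\abs{\hat{c}_\alpha^K-c_\alpha^K}<\epsilon$, replace $c_\alpha^{\mathcal{T}}$ by $c_\alpha^{\mathcal{T}}-\epsilon$ in $A$, and then let $\epsilon\downarrow0$ using continuity of the limiting CDF. This yields $\FWER\le\alpha+o(1)$. The same argument applies to the bootstrap critical values, provided their consistency is assumed.
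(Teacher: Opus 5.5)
Your proposal is correct and follows essentially the same route as the paper. Both arguments rest on an oracle event for the set of true nulls, the bound $\hat{t}_x\le\hat{Z}_x$ on that set, and monotonicity of the critical values in the index set, combined through the stepdown argument. The differences are that you prove inline the induction the paper takes from Theorem 9.1.3 of \citet{LehmannRomano2022text}, using the sharper condition $c_\alpha^{K}\le c_\alpha^{K'}$ for $K\subseteq K'$ rather than only across iterations, and that your $\epsilon$-argument handles estimated critical values explicitly where the paper leaves this implicit.
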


The upside of the stepdown procedure is that it weakly improves power for any DGP, but it has other limitations.
First, the critical values in \cref{meth:stepdown} are simulated based on the least favorable null with all not-yet-rejected $d_x=0$ in each step, which may be conservative.
Second, if the original MTP does not have at least one rejection, then the stepdown procedure makes no difference.
The method in \cref{sec:RSW} complements the stepdown procedure by addressing both of these limitations.

\subsection{Two-stage procedure} 
\label{sec:RSW}

Our two-stage procedure follows \citet{RomanoShaikhWolf2014}.
First, we construct a $1-\beta$ confidence region (CR) for the true vector of differences $d_x=m(x)-m(x+1)$ using a small $\beta<\alpha$.
Second, we run the MTP at level $\alpha-\beta$ with the critical value calibrated to the least favorable null within the CR.
\Cref{sec:RSW-1,sec:RSW-2} detail these two stages.
Our original MTP in \cref{meth:plain-MTP} can be seen as the special case with $\beta=0$, in which case the CR is the entire parameter space and thus always includes the overall least favorable null with all $d_x=0$.

\subsubsection{First stage: confidence region}
\label{sec:RSW-1}

The first stage is to construct a CR that jointly covers the true differences $d_x$ with high asymptotic probability, and then to find the least favorable null within this CR.
\Cref{prop:CR} establishes the asymptotic validity of the CR in \cref{meth:CR}, which implicitly inverts an MTP like \cref{meth:plain-MTP} but reversing the direction of the $H_{0x}$ inequalities, implicitly similar to the CR in (4) of \citet{RomanoShaikhWolf2014}.

\begin{method}[confidence region]
\label{meth:CR}
The CR for the true vector $\vecf{d}=(d_1,\ldots,d_{h-1})'$ is 
\begin{equation*}
\widehat{\mathrm{CR}} \equiv
\bigl\{ \vecf{\delta} : \delta_x \le \hat{d}_x - c_\beta \hat{s}_x \; \forall x\in\{1,\ldots,h-1\} \bigr\}
,
\end{equation*}
where $c_\beta$ is the $\beta$-quantile of the asymptotic distribution of $\min_{x\in\{1,\ldots,h-1\}} \hat{Z}_x$, $d_x\equiv m(x)-m(x+1)$ is the true difference, $\hat{d}_x=\hat{m}(x)-\hat{m}(x+1)$ is the estimated difference, $\hat{s}_x$ is the estimated asymptotic standard error of $\hat{d}_x$, and the asymptotic joint distribution of the $\hat{Z}_x\equiv(\hat{d}_x-d_x)/\hat{s}_x$ is in \cref{supp_eqn:Zhat-asy-dist}.
\end{method}

\begin{proposition}
\label{prop:CR}
Under \Cref{a:asy-normal}, the CR from \cref{meth:CR} covers the true point $\vecf{d}$ with asymptotic probability $1-\beta$: $\Pr(\vecf{d}\in\widehat{\mathrm{CR}})=1-\beta+o(1)$.
\end{proposition}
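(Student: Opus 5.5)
The plan is to rewrite the coverage event as an event about the studentized vector $\hat{\vecf{Z}}$, and then pass to the limit using \cref{res:asy-Zhat}. By definition of $\widehat{\mathrm{CR}}$ in \cref{meth:CR},
\begin{equation*}
\vecf{d}\in\widehat{\mathrm{CR}}
\iff d_x \le \hat{d}_x - c_\beta \hat{s}_x \;\forall x
\iff \hat{Z}_x=\frac{\hat{d}_x-d_x}{\hat{s}_x} \ge c_\beta \;\forall x
\iff \min_{x\in\{1,\ldots,h-1\}} \hat{Z}_x \ge c_\beta .
\end{equation*}
This uses $\hat{s}_x>0$, which holds with probability approaching one. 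Indeed, $n\hat{s}_x^2$ is a diagonal element of $\hat{\matf{V}}{}^b\pconv\matf{V}^b$. Here $\matf{V}^b$ is the asymptotic covariance of $\sqrt{n}(\hat{\vecf{d}}-\vecf{d})$, namely $\matf{A}\matf{V}^a\matf{A}'$ for the $(h-1)\times h$ first-difference matrix $\matf{A}$. Since $\matf{A}$ has full row rank and $\matf{V}^a$ is positive definite, $\matf{V}^b$ is positive definite, so its diagonal is strictly positive. Consequently $\Pr(\vecf{d}\in\widehat{\mathrm{CR}})=\Pr(\min_x\hat{Z}_x\ge c_\beta)+o(1)$.

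Next I will invoke \cref{res:asy-Zhat}, which gives $\hat{\vecf{Z}}\dconv\NormDist(\vecf{0},\matf{\Sigma})$. The map $\vecf{z}\mapsto\min_x z_x$ is continuous, so by the continuous mapping theorem $\min_x\hat{Z}_x\dconv M\equiv\min_x Z_x$ with $\vecf{Z}\sim\NormDist(\vecf{0},\matf{\Sigma})$. By construction, $c_\beta$ is the $\beta$-quantile of $M$.

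The main step, and the only potential obstacle, is to justify convergence of $\Pr(M_n\ge c_\beta)$ to $\Pr(M\ge c_\beta)=1-\beta$. The portmanteau theorem requires $c_\beta$ to be a continuity point of the CDF of $M$. I will argue that $M$ has a continuous distribution. $\matf{\Sigma}$ is a correlation matrix derived from the positive definite $\matf{V}^b$, so each $Z_x$ is a nondegenerate standard normal. Then $\Pr(M=c)\le\sum_x\Pr(Z_x=c)=0$ for every $c$. Hence $F_M$ is continuous, and the quantile satisfies $\Pr(M\ge c_\beta)=\Pr(M>c_\beta)=1-\beta$ exactly. Pólya-type uniformity is not even needed at the single point $c_\beta$.

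Finally, if $c_\beta$ is replaced in practice by a simulated or estimated $\hat{c}_\beta$ based on $\hat{\matf{\Sigma}}\pconv\matf{\Sigma}$, I will note the following. Continuity and strict monotonicity of $F_M$ near $c_\beta$ (the normal density is positive) imply that the quantile is a continuous functional of $\matf{\Sigma}$, so $\hat{c}_\beta\pconv c_\beta$. A Slutsky argument, applied to $\min_x\hat{Z}_x-\hat{c}_\beta+c_\beta$, then gives the same limit. Combining the steps yields $\Pr(\vecf{d}\in\widehat{\mathrm{CR}})=1-\beta+o(1)$.
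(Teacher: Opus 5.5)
Your proposal is correct and follows the paper's route: you rewrite coverage as the event $\min_x \hat{Z}_x \ge c_\beta$ and pass to the limit using \cref{res:asy-Zhat}. Beyond the paper's one-line ``by definition of $c_\beta$,'' you also justify $\hat{s}_x>0$ with probability approaching one, the continuity of the law of $\min_x Z_x$ needed for the portmanteau step, and the case of an estimated $c_\beta$; note only that the paper writes the difference matrix as $\matf{G}'$ and uses $\matf{A}$ for the diagonal scaling.
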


The least favorable null within the CR is characterized as follows.
The CR is an orthant whose corner point has components $\hat{d}_x - c_\beta\hat{s}_x$ for $x=1,\ldots,h-1$.
If the CR indeed contains the true $\vecf{d}$, then the least favorable null is the least-negative point inside the intersection of this CR and the null hypothesis region with each $d_x\le0$.
That is, the hypothetical $\vecf{d}$ within the CR that would generate the highest FWER is the point
\begin{equation}
\label{eqn:d-hat-star}
\min(\vecf{0}, \hat{\vecf{d}}^*)
\equiv
\bigl(
\min\{0,\hat{d}_1^*\} ,
\dots,
\min\{0,\hat{d}_{h-1}^*\}
\bigr)'
,\quad
\hat{d}_x^* \equiv \hat{d}_x-c_\beta \hat{s}_x .
\end{equation}

\subsubsection{Second stage: MTP}
\label{sec:RSW-2}

The second stage is to run an MTP like \cref{meth:plain-MTP} but with two modifications.
First, the FWER level is adjusted from $\alpha$ to $\alpha-\beta$ to account for possible error in the first stage.
Second, instead of calibrating the critical value to the overall least favorable null $\vecf{d}=\vecf{0}$, we calibrate the critical value to the new least favorable null in \cref{eqn:d-hat-star}.
\Cref{meth:RSW} describes this modified MTP, whose asymptotic validity is given in \cref{res:RSW}.

\begin{method}
\label{meth:RSW}
After running \cref{meth:CR}, the MTP rejects $H_{0x} \colon d_x \le 0$ when $\hat{t}_x > \hat{c}$, where the $t$-statistics are $\hat{t}_x=\hat{d}_x/\hat{s}_x$ with estimated standard error $\hat{s}_x$ defined below, and critical value $\hat{c}$ is defined as the $(1-\alpha+\beta)$-quantile of the $\max$ over $h-1$ jointly normal random variables with mean vector $\min(\vecf{0}, \hat{\vecf{d}}{}^*)/ \hat{\vecf{s}}$ and covariance matrix $\hat{\matf{\Sigma}}$ defined below, where
\begin{equation*}
\frac{\min(\vecf{0}, \hat{\vecf{d}}{}^*)}{\hat{\vecf{s}}}
\equiv \biggl(\frac{\min\{0, \hat{d}_1^*\}}{\hat{s}_1}, 
\dots, 
\frac{\min\{0, \hat{d}_{h-1}^*\}}{\hat{s}_{h-1}} \biggr) .
\end{equation*}
That is, $\hat{c}$ is the $(1-\alpha+\beta)$-quantile of $\max \{ \NormDist( \min(\vecf{0}, \hat{\vecf{d}}{}^*)/\hat{\vecf{s}},\hat{\matf{\Sigma}}) \}$, with $\hat{s}_x= \sqrt{\hat{V}^b_{(xx)}/n}$, where $\hat{\matf{V}}{}^b=\matf{G}' \hat{\matf{V}}{}^a \matf{G}$ as in \cref{supp_eqn:dhat-asy-dist}, with $\hat{\matf{V}}{}^a$ the consistent estimator of $\matf{V}^a$ from \cref{a:asy-normal}.
From \cref{supp_eqn:Zhat-asy-dist} and the text before and after it that defines $\matf{A}$ and $\matf{\hat{A}}$, $\matf{\Sigma} \equiv \matf{A} \matf{V}^b \matf{A}'=\matf{A} \matf{G}' \matf{V}^a \matf{G} \matf{A}'$, so $\hat{\matf{\Sigma}}= \matf{\hat{A}}\matf{G}'\hat{\matf{V}}{}^a  \matf{G} \matf{\hat{A}}'$.
\end{method}

In practice, $\hat{c}$ must be simulated because there is no closed-form expression for $\hat{c}$.
Similar to \cref{sec:cv-sim}, we take random draws of the Gaussian vector with the appropriate mean and covariance, and then we take the $(1-\alpha+\beta)$-quantile of the maxima of the vectors to get the simulated $\hat{c}$, denoted $\hat{\hat{c}}$.

\begin{theorem}
\label{res:RSW}
\Cref{meth:RSW} has strong control of asymptotic FWER at level $\alpha$.
\end{theorem}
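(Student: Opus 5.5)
We follow the Bonferroni-type argument of \citet{RomanoShaikhWolf2014}. Fix any true $\vecf{d}$ with true-null index set $\mathcal{T}=\{x:d_x\le0\}$, and assume $\mathcal{T}\neq\emptyset$, since otherwise the FWER is zero. Let $E$ be the event $\vecf{d}\in\widehat{\mathrm{CR}}$. We split
\begin{equation*}
\FWER \le \Pr(E^c) + \Pr\bigl(E\cap\{\exists x\in\mathcal{T}: \hat{t}_x>\hat{c}\}\bigr).
\end{equation*}
By \cref{prop:CR}, $\Pr(E^c)=\beta+o(1)$. It therefore suffices to show that the second term is at most $\alpha-\beta+o(1)$.

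On $E$, we have $d_x\le \hat{d}_x^*$ for every $x$. For $x\in\mathcal{T}$ we also have $d_x\le0$, so $d_x\le\min\{0,\hat{d}_x^*\}$. For $x\notin\mathcal{T}$ no such bound is needed, because those coordinates cannot produce false rejections. Write $\hat{t}_x=\hat{Z}_x+d_x/\hat{s}_x$. On $E\cap\{\exists x\in\mathcal{T}:\hat{t}_x>\hat{c}\}$ we get
\begin{equation*}
\max_{x\in\mathcal{T}}\bigl(\hat{Z}_x+\mu_x\bigr)>\hat{c},
\qquad \mu_x\equiv \min\{0,\hat{d}_x^*\}/\hat{s}_x .
\end{equation*}
By definition, $\hat{c}$ is the $(1-\alpha+\beta)$-quantile of $\max_{x\le h-1}(W_x+\mu_x)$ with $\vecf{W}\sim\NormDist(\vecf{0},\hat{\matf{\Sigma}})$. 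Dropping the coordinates outside $\mathcal{T}$ can only lower a maximum, so $\hat{c}\ge\hat{c}_{\mathcal{T}}$, where $\hat{c}_{\mathcal{T}}$ is the same quantile of $\max_{x\in\mathcal{T}}(W_x+\mu_x)$. It is therefore enough to bound $\Pr\bigl(\max_{x\in\mathcal{T}}(\hat{Z}_x+\mu_x)>\hat{c}_{\mathcal{T}}\bigr)$ by $\alpha-\beta+o(1)$.

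This last step is the main obstacle. The shift $\vecf{\mu}$ is random and is computed from the same data as $\hat{\vecf{Z}}$. Moreover, for nonbinding nulls ($d_x<0$), $\mu_x$ diverges to $-\infty$ at rate $\sqrt{n}$, so there is no fixed limit experiment. I would try the following. Let $\mathcal{T}_0=\{x:d_x=0\}$. For $x\in\mathcal{T}\setminus\mathcal{T}_0$, $\hat{Z}_x+\mu_x\le \hat{Z}_x+d_x/\hat{s}_x\to-\infty$ in probability on $E$. So with probability $1-o(1)$ the maximum over $\mathcal{T}$ is attained in $\mathcal{T}_0$, provided $\mathcal{T}_0\neq\emptyset$; if $\mathcal{T}_0=\emptyset$, the probability itself is $o(1)$.

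For $x\in\mathcal{T}_0$ we have $\mu_x\le0$, and on $E$, $\mu_x\ge d_x/\hat{s}_x=0$. Hence $\mu_x=0$ on $E$ for binding nulls. It remains to lower-bound $\hat{c}_{\mathcal{T}}$. Since $\mu_x\ge d_x/\hat{s}_x$ on $E$ for every $x$, $\hat{c}_{\mathcal{T}}$ is at least the $(1-\alpha+\beta)$-quantile of $\max_{x\in\mathcal{T}}(W_x+d_x/\hat{s}_x)$. By \cref{res:asy-Zhat} and $\hat{\matf{\Sigma}}\pconv\matf{\Sigma}$, this quantile converges to the $(1-\alpha+\beta)$-quantile $q$ of $\max_{x\in\mathcal{T}_0}W_x$ with $\vecf{W}\sim\NormDist(\vecf{0},\matf{\Sigma})$. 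The terms indexed by $\mathcal{T}\setminus\mathcal{T}_0$ drift to $-\infty$, and the limit law of the max is continuous because $\matf{\Sigma}$ is nondegenerate. By \cref{res:asy-Zhat}, $\max_{x\in\mathcal{T}_0}\hat{Z}_x\dconv\max_{x\in\mathcal{T}_0}W_x$. Together with $\liminf\hat{c}_{\mathcal{T}}\ge q$ in probability and continuity of the limit CDF at $q$, this yields the bound $\alpha-\beta+o(1)$.

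Finally, we add the two pieces to get $\FWER\le\alpha+o(1)$ for every configuration of true and false nulls, which is strong control in the sense of \cref{def:FWER-control}. The simulation error in $\hat{\hat{c}}$ is handled as in \cref{sec:cv-sim}.
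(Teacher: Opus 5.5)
There is one slip and one genuine gap. Your skeleton is the paper's: split on the event $E$ that the first-stage region covers $\vecf{d}$, use that on $E$ the critical value dominates the oracle $\hat{c}^o$ (the $(1-\alpha+\beta)$-quantile of $\max_{x\in\mathcal{T}}(W_x+d_x/\hat{s}_x)$), and get $\alpha-\beta$ from the oracle.

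The slip: for $x\in\mathcal{T}$, on $E$ you have $\mu_x\ge d_x/\hat{s}_x$. You use exactly this a few lines later to bound $\hat{c}_{\mathcal{T}}$. So $\hat{Z}_x+\mu_x\ge\hat{t}_x$, not $\le$. The conclusion $\hat{Z}_x+\mu_x\to-\infty$ for slack nulls still holds, because $\mu_x=\min\{0,\hat{t}_x-c_\beta\}\le\hat{t}_x-c_\beta$. Your case $\mathcal{T}_0\neq\emptyset$ therefore goes through, and it is essentially the paper's argument with fixed rather than local $\vecf{d}$.

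The gap is the case $\mathcal{T}_0=\emptyset$, and your relaxation of $\hat{t}_x$ to $\hat{Z}_x+\mu_x$ creates it. For a slack true null, with probability approaching one $\mu_x=\hat{t}_x-c_\beta$. Then $\hat{Z}_x+\mu_x=2\hat{Z}_x+d_x/\hat{s}_x-c_\beta$ counts the sampling noise twice, while $\hat{c}_{\mathcal{T}}$ adds an independent $W_x$ only once to the same data-dependent shift. Both sides drift to $-\infty$ together, and the comparison does not degenerate.

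Take $h=2$ and $d_1<0$. Since $\hat{\matf{\Sigma}}$ has unit diagonal, $\hat{c}_{\mathcal{T}}=\mu_1+z_{1-\alpha+\beta}$, with $z_p$ the $p$-quantile of $\NormDist(0,1)$. Your reduced event then coincides, with probability approaching one, with $\{\hat{Z}_1>z_{1-\alpha+\beta}\}$. Its probability tends to $\alpha-\beta$, not $0$.

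Now take several slack nulls at a common standardized distance, e.g.\ $d_1=d_2=d_3<0$, equal $V^b_{(xx)}$, and $\hat{\matf{V}}{}^a=\matf{V}^a$. The reduced event becomes $\{M>g\}$, where $M=\max_x\hat{Z}_x$. Here $g$ is the $(1-\alpha+\beta)$-quantile, over $\vecf{W}\sim\NormDist(\vecf{0},\matf{\Sigma})$ with $\hat{\vecf{Z}}$ held fixed, of $\max_x(W_x+\hat{Z}_x-M)$. Since $g$ lies below the $(1-\alpha+\beta)$-quantile of $\max_x W_x$, strictly so with positive probability, the limit exceeds $\alpha-\beta$. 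So the statement you reduced to is false there, even though the actual FWER is $o(1)$. This is exactly the degeneracy the paper notes ($c^o=-\infty$ under fixed $\vecf{d}$). The paper sidesteps it by comparing the unrelaxed $\max_{x}\hat{t}_x$ with $\hat{c}^o$ under the local parametrization $d_x=\gamma_x/\sqrt{n}$.

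To repair your fixed-$\vecf{d}$ route, do not relax slack nulls. If $\hat{t}_x<c_\beta$, then $\mu_x=\hat{t}_x-c_\beta$. Since $\hat{c}$ is a quantile of a maximum that includes $W_x+\mu_x$, we get $\hat{c}\ge\mu_x+z_{1-\alpha+\beta}>\hat{t}_x$, using $c_\beta\le z_\beta<z_{1-\alpha+\beta}$. Hence $H_{0x}$ can be rejected only if $\hat{t}_x\ge c_\beta$, which has vanishing probability when $d_x<0$ is fixed. Binding nulls have $\hat{t}_x=\hat{Z}_x$. Your bound $\hat{c}\ge\hat{c}^o\pconv q$ on $E$ then gives $\alpha-\beta$ when $\mathcal{T}_0\neq\emptyset$, and the whole contribution is $o(1)$ when $\mathcal{T}_0=\emptyset$.
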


\subsection{Simulation with refined MTPs}
\label{sec:power-sim}

We provide a small simulation to illustrate the potential power improvement of the stepdown and two-stage procedures.
We continue the CEF example but with different $m(x)$ values.
We compare the FWER and power of five different methods: the plain MTP (\cref{meth:plain-MTP}), Bonferroni, stepdown (\cref{meth:stepdown}), two-stage (\cref{meth:RSW}), and combined stepdown and two-stage.
As in \cref{sec:sim}, we used version 4.5.1 of R \citep{R.core} and the MASS package \citep{R.MASS}.

\subsubsection{Setup}
\label{sec:power-sim-setup}

The DGP is designed to have some features advantageous for the stepdown procedure and other features advantageous for the two-stage procedure.
We have $h=23$ values, $x\in\{1,\ldots,23\}$.
The stepdown helps when there are some initial rejections, so a smaller critical value can be used in the next iteration.
Thus, the stepdown benefits from some $H_{0x}$ being clearly violated, which in our context means $m(\cdot)$ decreasing steeply at certain points.
In our DGP, $m(\cdot)$ decreases from $x=12$ to $x=23$, with especially steep decreases from $x=13$ to $x=15$.
Complementing these stepdown benefits, the two-stage helps most when there are some $H_{0x}$ that are clearly satisfied, which means $m(\cdot)$ increasing.
In our DGP, $m(\cdot)$ increases from $x=2$ to $x=12$.
Because the stepdown and two-stage procedures complement each other, the combined power improvement is bigger than the individual improvements, rejecting more of the $16\le x\le 23$ points where $m(\cdot)$ is slowly decreasing and thus $H_{0x}$ is moderately violated.
We also have $m(1)=m(2)=0$, so $H_{01}$ is the most susceptible to false rejections.
Altogether, we set
\begin{equation}
\label{eqn:power-sim-m}
m(x) = \begin{cases}
0 & \text{if }\phantom{1}1\le x\le 2 ,\\[-0pt]
10(x-2) & \text{if }\phantom{1}3\le x\le 12 ,\\[-0pt]
100-10(x-12) & \text{if }13\le x\le 15 ,\\[-0pt]
70-0.42(x-15) & \text{if }16\le x\le 23 .
\end{cases}
\end{equation}

The data generation steps are the same as in \cref{sec:sim-setup}.
First, we take $n_x$ observations for each possible $X_i=x$ value, for $n_x\in\{50,100,200\}$ in turn.
% , so there are $\sum_{x=1}^{23}n_x=2300$ total observations.
Second, we generate the $Y_i$ given its corresponding $X_i=x$.
The conditional distribution is $Y_i\mid X_i\sim\NormDist(m(X_i),1)$, with $m(\cdot)$ from \cref{eqn:power-sim-m}.

We simulate $1000$ datasets and compute the properties of the five methods.
For each simulated dataset, we run the plain MTP (\cref{meth:plain-MTP}), Bonferroni, stepdown (\cref{meth:stepdown}), two-stage (\cref{meth:RSW}), and combined stepdown and two-stage.
Each is run with nominal FWER level $\alpha=0.05$, and with $10^5$ draws to compute the critical value.
The two-stage procedure uses a first-stage $99\%$ CR by setting $\beta=0.01$.
Like before, we compute FWER as the proportion of simulated datasets in which any true null hypothesis is rejected.
Given \cref{eqn:power-sim-m}, $H_{0x}\colon m(x)\le m(x+1)$ is true for $1\le x\le 12$.
We also compute the power.
For each false $H_{0x}$, the simulated pointwise power is the proportion of simulated datasets in which that $H_{0x}$ is rejected.
To summarize power, for each method, we sum its pointwise power across all $13\le x\le 23$.
This can also be interpreted as the average number of false $H_{0x}$ rejected.
In the far right column of \cref{tab:sim2}, this is also expressed as a percentage of the total $11$ false $H_{0x}$.
For example, if on average $5.5$ false $H_{0x}$ are rejected, then this translates to $(5.5/11)\times100\%=50\%$.

\subsubsection{Simulation results}

\begin{table}[htb]
\centering
\caption{\label{tab:sim2}Simulation results, $\alpha=0.05$, $\beta=0.01$.}
\sisetup{round-precision=3}
\begin{tabular}[c]{
rl
S[table-format=1.3,round-precision=3]
S[table-format=2.2,round-precision=2]
S[table-format=2.1,round-precision=1]
}
\toprule
$n_x$ & \multicolumn{1}{c}{Method} & \multicolumn{1}{c}{\FWER} & 
\multicolumn{1}{c}{Power (sum)} & \multicolumn{1}{c}{Power (\% of $11$)} \\
\midrule
   50 & Bonferroni &  0.00400 &  4.96800 &  45.1636 \\
   50 &  Plain MTP &  0.00400 &  4.99800 &  45.4364 \\
   50 &   Stepdown &  0.00500 &  5.20100 &  47.2818 \\
   50 &  Two-stage &  0.00500 &  5.31800 &  48.3455 \\
   50 &   Combined &  0.01100 &  5.84900 &  53.1727 \\
% [1] 2.837597
%       0%      25%      50%      75%     100% 
% 2.818814 2.829072 2.831758 2.834468 2.844583 
\midrule
  100 & Bonferroni &  0.00400 &  7.46100 &  67.8273 \\
  100 &  Plain MTP &  0.00400 &  7.47100 &  67.9182 \\
  100 &   Stepdown &  0.00600 &  7.88100 &  71.6455 \\
  100 &  Two-stage &  0.00600 &  7.86200 &  71.4727 \\
  100 &   Combined &  0.02400 &  9.11200 &  82.8364 \\
% [1] 2.837597
%       0%      25%      50%      75%     100% 
% 2.820491 2.829258 2.832047 2.835068 2.843933 
\midrule
  200 & Bonferroni &  0.00100 & 10.31700 &  93.7909 \\
  200 &  Plain MTP &  0.00100 & 10.32600 &  93.8727 \\
  200 &   Stepdown &  0.00100 & 10.53400 &  95.7636 \\
  200 &  Two-stage &  0.00100 & 10.46600 &  95.1455 \\
  200 &   Combined &  0.02600 & 10.87600 &  98.8727 \\
% [1] 2.837597
%       0%      25%      50%      75%      100%
% 2.820038 2.829565 2.832495   2.835024  2.844827 
\bottomrule
\end{tabular}
% [1] "2025-08-27 13:24:36 CDT"
% Time difference of 2.618274 hours
% 
% \begin{threeparttable}
% \begin{tabular}[c]{
% l
% S[table-format=1.3,round-precision=3]
% S[table-format=1.2,round-precision=2]
% }
% \toprule
% \multicolumn{1}{c}{Method} & \multicolumn{1}{c}{\FWER} & \multicolumn{1}{c}{Power (sum)} \\
% \midrule
% Bonferroni &  0.00400 &  7.46100 \\
%  Plain MTP &  0.00400 &  7.47100 \\
%   Stepdown &  0.00600 &  7.88100 \\
%  Two-stage &  0.00600 &  7.86200 \\
%   Combined &  0.02400 &  9.11200 \\
% \bottomrule
% \end{tabular}
% % \end{threeparttable}
% % [1] 2.837597
% %       0%      25%      50%      75%     100% 
% % 2.820491 2.829258 2.832047 2.835068 2.843933 
\end{table}

\Cref{tab:sim2} shows the results, with the following patterns.
First, all five methods control FWER below the desired $\alpha=0.05$ level.
The plain MTP and Bonferroni have the smallest FWER, and the combined method the largest, but still well below $\alpha$.
Unlike in \cref{tab:sim-plain}, where the DGP was the least favorable null that leads to asymptotically exact FWER, the DGP here is far from least favorable in order to study power rather than FWER control, so the FWER will not approach $\alpha$ even with larger samples.
Second, the power improvements can be seen.
The qualitative patterns are the same for each $n_x$, so we focus on the $n_x=100$ results in more detail.
Individually, the stepdown and two-stage procedures both improve the aggregate power by around $0.4$; that is, compared to the plain MTP rejecting $7.5$ false $H_{0x}$ on average, they each reject $7.9$.
The combined improvement is even larger because the stepdown further reduces the critical value after the additional rejections from the two-stage method.
However, recall that the DGP was designed to have features that showcase the stepdown and two-stage procedures, so the improvements will not always be so large.
Finally, we can see that as $n_x$ increases, power increases toward $100\%$.

\section{Conclusion}

We have provided new multiple testing methods and corresponding confidence sets to provide richer results about a function's monotonicity than testing a single global null hypothesis.
Our assumptions cover a wide range of descriptive and causal statistical models.
This work can extend in several directions, including an asymptotically increasing number of evaluation points that leverages \citet{ChernozhukovEtAl2019many}, and the development of Bayesian credible sets.

For continuous $X$, another extension is to combine an existing global monotonicity test with the closure method \citep[e.g.,][\S9.2]{LehmannRomano2022text} as follows.
First, partition the support $\mathcal{X}$ into intervals $\mathcal{X}_j$ for $j=1,\ldots,h$.
Second, run the global test on every possible combination of such intervals.
Third, reject that the function is increasing over interval $\mathcal{X}_j$ if increasingness is rejected at level $\alpha$ for every combination of intervals that includes $\mathcal{X}_j$.
As shown by \citet[\S9.2.1]{LehmannRomano2022text}, this controls the familywise error rate at level $\alpha$.

\backmatter

% \singlespacing

\bmhead{Supplementary information}
% \section*{Supplementary Materials}

% The supplementary appendix has extensions to IVQR and functional coefficient models, full proofs of all our theoretical results, and an additional empirical example.
The supplementary appendix has extensions to IVQR and functional coefficient models as well as full proofs of all our theoretical results.
Also provided is R code implementing our new methods and replicating all simulation and empirical results.

% [...but published paper has it here!] TEST said to put on title page
\bmhead{Acknowledgments}

Thanks to the following for their helpful feedback: Alyssa Carlson, Zack Miller, Shawn Ni, journal reviewers and editors, and participants in the annual meetings of the Missouri Valley Economic Association (2021) and Midwest Econometrics Group (2023).
This work is based on a chapter of the first author's PhD dissertation.

\section*{Declarations}

\bmhead{Conflict of interest}
We (the authors) declare no conflict of interest.

\bmhead{Data availability}
All data is available publicly and loaded automatically through the provided R code.

\bmhead{Code availability}
All code is available on the second author's website.%
\footnote{\url{https://kaplandm.github.io/}}

% \begin{itemize}
% \item Funding: none
% \item Conflict of interest: we (the authors) declare no conflict of interest.
% \item Ethics approval and consent to participate: n/a
% \item Consent for publication: yes
% \item Data availability: yes
% \item Materials availability: n/a
% \item Code availability: yes
% \item Author contribution: n/a
% \end{itemize}

% \clearpage

%% BioMed_Central_Bib_Style_v1.01

 %\bibliography{_bib}

\clearpage

%%%%%APPENDIX %%%%%%

\begin{appendices}

% \noindent
\begin{center}
\LARGE\singlespacing
Supplementary Appendix for

``Multiple Testing of a Function's Monotonicity'' 

\large ~\\[-6pt] by Wei Zhao and David M.\ Kaplan

\large August 31, 2025 %\today
\end{center}

\setcounter{page}{1}

\section{IVQR and Functional Coefficient Model Examples}
\label{sec:app-ext}

\Cref{sec:IVQR,sec:functional-coefficient} show how our methodology can apply to instrumental variables quantile regression (IVQR) and functional coefficient models, respectively, with some small modifications.

\subsection{IVQR}
\label{sec:IVQR}

Consider a causal random coefficient model
\begin{equation*}
Y = \vecf{X}'\vecf{\beta}(U), \quad U \sim \UnifDist(0,1) ,
\end{equation*}
with regressor vector and random coefficient vector respectively
\begin{equation*}
\begin{split}
\vecf{X} &= (1,X_1,\ldots,X_p)' , \\
\vecf{\beta}(U) &= (\beta_0(U), \beta_1(U), \ldots, \beta_p(U)) .
\end{split}
\end{equation*}
Each component function $\beta_j(\cdot)$ is non-random, and $U$ is an unobserved scalar random variable.
The endogeneity is in terms of the slope coefficients' correlation with the regressors.
This is a special case of the IVQR model proposed by \citet{ChernozhukovHansen2005}.

Imagine we want to learn which values of $U$ correspond to an increasing relationship between $Y$ and $X_1$.
Mathematically, this is equivalent to asking at which $u$ values is $\beta_1 (u)> 0$.
Choosing a grid of $u$ values, the null hypotheses of interest are
\begin{equation}
\label{eqn:IVQR-nulls}
H_{0j}\colon \beta_1(u_j) \le 0
,\quad
j=1,\ldots,h .
\end{equation}

\Cref{a:IVQR} is essentially a special case of \Cref{a:asy-normal}, replacing $m$ with $\beta_1$, and $x$ with $u$.

\begin{assumpIVQR}
\label{a:IVQR}
The estimator $\hat{\vecf{\beta}}_1\equiv(\hat{\beta}_1(u_1),\hat{\beta}_1(u_2), \ldots,\hat{\beta}_1(u_h))'$ is asymptotically normal given fixed $0<u_1<\cdots<u_h<1$: given true value $\vecf{\beta}_1\equiv(\beta_1(u_1),\beta_1(u_2), \ldots, \beta_1(u_h))'$,
\begin{equation*}
\sqrt{n}(\hat{\vecf{\beta}}_1-\vecf{\beta}_1)
\dconv \NormDist(\vecf{0},\matf{\Omega}) ,
\end{equation*}
where positive definite matrix $\matf{\Omega}$ can be estimated consistently, $\hat{\matf{\Omega}}\pconv\matf{\Omega}$. 
\end{assumpIVQR}

\Cref{a:IVQR} is justified by results from \citet{ChernozhukovHansen2006}.
They derive the limiting Gaussian process of $\sqrt{n}(\hat{\vecf{\beta}}(\cdot)-\vecf{\beta}(\cdot))$.
Specifically, their Theorem 3 directly implies \Cref{a:IVQR} because their result is joint over components of $\vecf{\beta}$ as well as a continuum of $u$ values, and \Cref{a:IVQR} is taking marginal distributions of specific $\beta_1(u)$ over only a finite number of $u$ values.
Also, in Remark 4, they provide a way to estimate the components of the asymptotic variance.

\Cref{meth:IVQR-MTP} describes the MTP.
It is very similar to \cref{meth:plain-MTP}, but replacing $\hat{d}$ with $\hat\beta_1$, and $x$ with $u$.
Because we are now interested in $\beta_1(u_j)$ itself rather than a difference like $m(x_j)-m(x_{j+1})$, the joint asymptotic distribution of the $\hat{Z}_j$ differs, but the general approach is the same.
Below the method, \cref{res:IVQR} states its validity.

\begin{method}
\label{meth:IVQR-MTP}
Given the $H_{0j}$ in \cref{eqn:IVQR-nulls}, the MTP rejects $H_{0j}$ when $\hat{t}_j>c_\alpha$, where the $t$-statistics are $\hat{t}_j=\hat{\beta}_1(u_j)/\hat{s}_j$, the estimated standard errors are $\hat{s}_j\equiv \sqrt{\hat{\Omega}_{jj} / n}$ given the consistent estimator $\hat{\matf{\Omega}}$ from \Cref{a:IVQR}, and $c_\alpha$ is the critical value.
Specifically, $c_\alpha$ is the $(1-\alpha)$-quantile of the asymptotic distribution of $\hat{Z}^*$, which is the maximum of $h$ correlated normal random variables $\hat{Z}_j$:
\begin{equation*}
\hat{Z}^*\equiv\max_{j\in\{1,2,\dots,h\}} \hat{Z}_j
,\quad
\hat{Z}_j \equiv \frac{\hat{\beta}_1(u_j)-\beta_1(u_j)}{\hat{s}_j} \textrm{ for }j\in\{1,2,\dots,h\}
,
\end{equation*}
and the asymptotic joint normal distribution of the $\hat{Z}_j$ is in \cref{eqn:Zhat-asy-dist-IVQR}. 
The details of simulating $c_\alpha$ are as in \cref{sec:cv-sim}, except the $\hat{\matf{\Sigma}}$ is from \cref{eqn:Zhat-asy-dist-IVQR} instead of \cref{eqn:Zhat-asy-dist}.
\end{method}

\begin{theorem}
\label{res:IVQR}
Under \Cref{a:IVQR}, \cref{meth:IVQR-MTP} has strong control of asymptotic FWER.
\end{theorem}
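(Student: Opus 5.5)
The plan mirrors the argument for \cref{res:plain-MTP}, with $\hat{d}_x$ replaced by $\hat{\beta}_1(u_j)$. The first step is the analogue of \cref{res:asy-Zhat}. By \Cref{a:IVQR} and $\hat{\matf{\Omega}}\pconv\matf{\Omega}$, we have
\begin{equation*}
\sqrt{n}\,\hat{s}_j=\hat{\Omega}_{jj}^{1/2}\pconv\Omega_{jj}^{1/2}>0 ,
\end{equation*}
where positivity holds because $\matf{\Omega}$ is positive definite. Define $\hat{\matf{A}}\equiv\diag(\hat{\Omega}_{11}^{-1/2},\ldots,\hat{\Omega}_{hh}^{-1/2})$ and its probability limit $\matf{A}$. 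Then $\hat{\vecf{Z}}=\hat{\matf{A}}\sqrt{n}(\hat{\vecf{\beta}}_1-\vecf{\beta}_1)$. Slutsky's lemma and the continuous mapping theorem give
\begin{equation*}
\hat{\vecf{Z}}\dconv\NormDist(\vecf{0},\matf{\Sigma}),\qquad \matf{\Sigma}\equiv\matf{A}\matf{\Omega}\matf{A}' ,
\end{equation*}
which is the correlation matrix of $\matf{\Omega}$, estimated consistently by $\hat{\matf{\Sigma}}=\hat{\matf{A}}\hat{\matf{\Omega}}\hat{\matf{A}}'$. Continuous mapping then gives $\max_j\hat{Z}_j\dconv\max_j Z_j$ with $\vecf{Z}\sim\NormDist(\vecf{0},\matf{\Sigma})$.

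Next, fix any configuration of true and false nulls and let $\mathcal{T}=\{j:\beta_1(u_j)\le0\}$. If $\mathcal{T}$ is empty, the FWER is zero. Otherwise, for each $j\in\mathcal{T}$ we have $\beta_1(u_j)/\hat{s}_j\le0$, so
\begin{equation*}
\hat{t}_j=\hat{Z}_j+\beta_1(u_j)/\hat{s}_j\le\hat{Z}_j .
\end{equation*}
Therefore
\begin{equation*}
\FWER=\Pr\Bigl(\max_{j\in\mathcal{T}}\hat{t}_j>\hat{c}_\alpha\Bigr)\le\Pr\Bigl(\max_{j\in\mathcal{T}}\hat{Z}_j>\hat{c}_\alpha\Bigr)\le\Pr\Bigl(\max_{j}\hat{Z}_j>\hat{c}_\alpha\Bigr).
\end{equation*}
This is the least favorable reduction: the bound is attained when every $\beta_1(u_j)=0$.

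The main obstacle is the critical value. It is computed from $\NormDist(\vecf{0},\hat{\matf{\Sigma}})$, so it is random, and we need $\hat{c}_\alpha\pconv c_\alpha$, the $(1-\alpha)$-quantile of $\max_j Z_j$. I would argue this in three steps.
\begin{itemize}
\item The distribution of $\max_j Z_j$ is continuous with a strictly increasing CDF near $c_\alpha$. This holds because each $Z_j$ has unit variance, so the maximum has a positive density on $\R$.
\item The map from $\matf{\Sigma}$ to the law of $\max_j Z_j$ is continuous in the weak topology, because $\NormDist(\vecf{0},\matf{\Sigma})$ depends continuously on $\matf{\Sigma}$.
\item Quantiles are continuous at points of strict monotonicity, so $\hat{\matf{\Sigma}}\pconv\matf{\Sigma}$ implies $\hat{c}_\alpha\pconv c_\alpha$.
\end{itemize}
Simulation error can be ignored by taking $B\to\infty$, as in \cref{sec:cv-sim}.

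Finally, Slutsky's lemma gives $\max_j\hat{Z}_j-\hat{c}_\alpha\dconv\max_j Z_j-c_\alpha$. Since $c_\alpha$ is a continuity point, $\Pr(\max_j\hat{Z}_j>\hat{c}_\alpha)\to\alpha$. Combined with the inequality chain above, this gives $\FWER\le\alpha+o(1)$ for every configuration of true and false nulls, which is strong control of asymptotic FWER.
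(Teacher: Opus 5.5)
Your proposal is correct and follows the paper's proof essentially step for step: the same chain $\hat{t}_j\le\hat{Z}_j$ on true nulls, enlargement to the maximum over all $j$, and the limit $\alpha$, with $\hat{\vecf{Z}}=\hat{\matf{A}}\sqrt{n}(\hat{\vecf{\beta}}_1-\vecf{\beta}_1)\dconv\NormDist(\vecf{0},\matf{A}\matf{\Omega}\matf{A}')$ exactly as the paper derives it. Your extra argument that $\hat{c}_\alpha\pconv c_\alpha$ fills in a step the paper avoids by defining $c_\alpha$ as the quantile of the asymptotic distribution, but the continuity and strict monotonicity of the CDF of $\max_j Z_j$ follow from $\matf{\Sigma}$ being positive definite (inherited from $\matf{\Omega}$), not from unit variances alone.
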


\subsection{Functional coefficient model}
\label{sec:functional-coefficient}

The setup of the functional coefficient model is similar to that of IVQR in \cref{sec:IVQR}.

Consider a functional (varying) coefficient model
\begin{align}
\label{eqn:fc-model}
Y=\vecf{X}'\vecf{\beta}(\vecf{W})+\epsilon ,
\end{align}
where $\vecf{X}=(1, X_1, \ldots, X_p)'$, $\vecf{\beta}(\vecf{W})=(\beta_0(\vecf{W}), \beta_1(\vecf{W}),\ldots, \beta_p(\vecf{W}))'$ is the coefficient vector, each component $\beta_j(\cdot)$ is a non-random function, and $\vecf{W}$ is a vector of variables that determine the coefficients through the $ \vecf{\beta}(\cdot)$ function.

\Cref{eqn:fc-model} is a general setup.
The simple model with scalar $W$ was proposed by \citet{ClevelandEtAl1992}.
\Citet{HastieTibshirani1993} study the model $Y = X_1\beta_1(W_1) + X_2\beta_2(W_2) + \cdots + X_p\beta_p(W_p) + \epsilon$.
Functional coefficient models have been developed for longitudinal data \citep{SenturkMuller2010}, time series data \citep{CaiEtAl2009}, and other settings.

Similar to \cref{sec:IVQR}, we want to learn which values of $\vecf{W}$ correspond to an increasing relationship between $Y$ and $X_1$.
Mathematically, this is equivalent to asking at which $\vecf{w}$ values is $\beta_1(\vecf{w})>0$.
Choosing a grid of $\vecf{w}$ values, the null hypotheses of interest are
\begin{equation*}
H_{0j}\colon \beta_1(\vecf{w}_j) \le 0
,\quad
j=1,\ldots,h .
\end{equation*} 
Qualitatively, these nulls are the same as the IVQR nulls in \cref{eqn:IVQR-nulls}, just replacing $u_j$ with $\vecf{w}_j$.
Therefore, if the estimators of vector $\vecf{\beta}_1\equiv(\beta_1(\vecf{w}_1), \ldots, \beta_1(\vecf{w}_h))'$ satisfy the joint asymptotic normality of \Cref{a:IVQR}, then the MTP in \cref{meth:IVQR-MTP} has strong control of asymptotic FWER.
For example, for the local linear regression from \citet{ZhuEtAl2012}, they set up a model in a very general way that includes \cref{eqn:fc-model} as a special case; their Theorem 1 implies asymptotic joint normality under centain conditions like proper bandwidth choice.

\section{Proofs}
\label{sec:app-proofs}

\subsection{Proof of \texorpdfstring{\cref{res:asy-Zhat}}{Lemma \ref{res:asy-Zhat}}}

\begin{proof}
We first derive the asymptotic distribution of $\sqrt{n} (\hat{\vecf{d}}-\vecf{d})$, defining $\hat{\vecf{d}}\equiv(\hat{d}_1, \dots, \hat{d}_{h-1})'$ and $\vecf{d}\equiv(d_1, \dots, d_{h-1})'$, so $\hat{\vecf{d}}-\vecf{d}=(\hat{d}_1-d_1, \dots, \hat{d}_{h-1}-d_{h-1}) $.
By the delta method,
\begin{equation}
\label{eqn:dhat-asy-dist}
\sqrt{n} (\hat{\vecf{d}}- \vecf{d}) \dconv \NormDist(\vecf{0},\matf{V}^b) ,
\end{equation}
where $\matf{V}^b=\matf{G}' \matf{V}^a \matf{G}$, and $\matf{G}$ is the $h\times(h-1)$ first derivative (Jacobian) of $\vecf{d}'$ with respect to $\vecf{m}$, with row-$x$ column-$j$ element $G_{(xj)}=\Ind{x=j} - \Ind{x=j+1}$, for $x=1,\ldots,h$ and $j=1,\ldots,h-1$.
Under \Cref{a:asy-normal}, $\hat{\matf{V}}{}^b=\matf{G}'\hat{\matf{V}}{}^a  \matf{G}  \pconv \matf{V}^b$.

Recall from \cref{meth:plain-MTP} that
$\hat{Z}_x\equiv (\hat{d}_x-d_x)/\hat{s}_x$, where $\hat{s}_x$ is the asymptotic standard error of $\hat{d}_x$.
Gathering these,
\begin{equation*}
\begin{aligned}
\hat{\vecf{Z}}\equiv  (
\hat{Z}_1, \dots, \hat{Z}_{h-1}
)
=
\matf{\hat{A}}\sqrt{n}(\hat{\vecf{d}}- \vecf{d})
\end{aligned}
\end{equation*}
where $\matf{\hat{A}}$ is a diagonal matrix with elements $\hat{A}_{(xx)}=1/(\sqrt{n}\hat{s}_x)$, $x=1,\ldots,h-1$.
Thus,
\begin{equation}
\label{eqn:Zhat-asy-dist}
\hat{\vecf{Z}} \dconv \NormDist (\vecf{0},\matf{\Sigma})
,\quad
\matf{\Sigma} \equiv \matf{A} \matf{V}^b \matf{A}'=\matf{A} \matf{G}' \matf{V}^a \matf{G} \matf{A}' ,
\end{equation}
where $\matf{A}$ is a diagonal matrix with generic element $A_{(xx)}=1/\sqrt{V^b_{(xx)}}$, the reciprocal of the square root of the corresponding diagonal element of $\matf{V}^b$.
Under \Cref{a:asy-normal}, $\hat{\matf{\Sigma}}= \matf{\hat{A}}\matf{G}'\hat{\matf{V}}{}^a  \matf{G} \matf{\hat{A}}' \pconv \matf{\Sigma}$.

Therefore, the asymptotic distribution of $\hat{Z}^*$ is a distribution of the maximum of $h-1$ correlated normal random variables with mean vector $\vecf{0}$ and covariance matrix $\matf{\Sigma}$. 
\end{proof}

\subsection{Proof of \texorpdfstring{\cref{res:plain-MTP}}{Theorem \ref{res:plain-MTP}}}
\label{sec:pf-MTP}

\begin{proof}
To establish strong control of \FWER,  the critical value $c_\alpha$ needs to be the $(1-\alpha)$-quantile of the asymptotic distribution of $\hat{Z}^*$.
The following steps (discussed below) show the reason:
\begin{equation}
\label{eqn:strong control of FWER 2}
\begin{aligned}
\FWER &\equiv\Pr(\text{reject any true }H_{0x})\\
 &=\Pr \left (\max_{x\colon H_{0x} \text{ true}}\hat{t}_x > c_\alpha \right)\\
 &\le \Pr \left(\max_{x\colon H_{0x }\text{ true}} \hat{Z}_x > c_\alpha \right)\\
 &\le \Pr \Bigl( \overbrace{\max_{x\in\{1,\ldots,h-1\}} \hat{Z}_x}^{\equiv\hat{Z}^*}> c_\alpha \Bigr) \\
 &\to \alpha   .
\end{aligned}
\end{equation} 

The first equality (definition) is the definition of FWER in \cref{eqn:FWER}.

The second equality holds because the MTP rejects $H_{0x}$ when $\hat{t}_x>c_\alpha$, so the event $\{\text{reject any true }H_{0x}\}$ is the same as the event $\{\text{max}_{\{x\colon H_{0x}\text{ true}\}}\hat{t}_x > c_\alpha\}$.

The first inequality holds because if $H_{0x}$ is true, then $d_x \le 0$, so subtracting the true negative value (instead of zero) makes $\hat{Z}_x\ge\hat{t}_x$:
\begin{equation}
\label{eqn:txhat-le-Zxhat}
\hat{t}_x\equiv \frac{\hat{d}_x}{\hat{s}_x}
\le \frac{\hat{d}_x- d_x}{\hat{s}_x}
\equiv \hat{Z}_x.
\end{equation}

The second inequality is from the expansion of the support of the maximum.
Because $\{x:H_{0x}\textrm{ true}\} \subseteq \{1,\ldots,h-1\}$, taking a maximum over $\{1,\ldots,h-1\}$ yields a weakly larger value than taking a maximum over $\{x:H_{0x}\textrm{ true}\}$.

The final limit holds by definition of $c_\alpha$.
This is the reason why the MTP requires that $c_\alpha$ is $(1-\alpha)$-quantile of the asymptotic distribution of $\hat{Z}^*$.

Note that both inequalities are binding (equalities) in the special case that $d_x=0$ for all $x$.
This is the least favorable null.
Then, asymptotically, $\FWER\to\alpha$ exactly.
In other cases, asymptotic FWER is strictly below $\alpha$.
\end{proof}

\subsection{Proof of \texorpdfstring{\cref{res:outer-CS}}{Corollary \ref{res:outer-CS}}}

\begin{proof}
The equations below turn the coverage probability statement into probability in terms of rejecting null hypotheses.
Then, it is easy to link back to the fact that $\FWER\le \alpha + o(1)$, which is the source of the last inequality here:
\begin{align*}
\Pr(\hat{\mathcal{S}}_o\supseteq\mathcal{S})&=\Pr(\text{no true }H_{0x}\text{ is rejected})\\
 &= \Pr(\text{no }H_{0x}\text{ is rejected with }x\in\mathcal{S} )\\
 &= 1-\Pr(\text{any }H_{0x}\text{ is rejected with }x\in\mathcal{S})\\
 &= 1- \FWER \ge 1-\alpha+o(1) .
\qedhere
\end{align*} 
\end{proof}

\subsection{Proof of \texorpdfstring{\cref{res:inner-CS}}{Corollary \ref{res:inner-CS}}}

\begin{proof}
The only way we could incorrectly include in the inner CS $x$ where $m(x)$ is not increasing is if we falsely reject the true null $H_{0x}^*$.
Again, it is easy to link back to the fact that the probability of even just one false rejection is controlled below $\alpha$, i.e., $\FWER \le \alpha + o(1)$, which is the source of the last inequality here:
\begin{align*}
\Pr(\hat{\mathcal{S}}_i\subseteq\mathcal{S}) &= 1-\Pr(\text{falsely reject any true }H_{0x}^*\colon x\notin\mathcal{S})\\
 &= 1 - \FWER
\ge 1 - \alpha + o(1) .
\qedhere
\end{align*}
\end{proof}

\subsection{Proof of \texorpdfstring{\cref{prop: just stepdown}}{Proposition \ref{prop: just stepdown}}}

\begin{proof}
First, consider the infeasible oracle critical value from \Cref{meth:stepdown-cv} when the true set of the hypotheses $\hat{K}^{(i)}=\mathcal{S}\equiv\{x : H_{0x}$ is true$\}$ is used, i.e., the critical value $c_{\alpha}^\mathcal{S}$, which is the $(1-\alpha)$-quantile of $\hat{Z}^{*\mathcal{S}}$, using notation $\hat{Z}^{*\mathcal{A}}\equiv\max_{x\in\mathcal{A}}\hat{Z}_x$ for any set $\mathcal{A}\subseteq\mathcal{X}$.
Hypothetically, if this oracle critical value were used, then
\begin{align}
\label{eq: oracle ineq}
\lim _{n \rightarrow \infty} \Pr\left(\max _{x \in \mathcal{S}} \hat{t}_x>c_{\alpha}^\mathcal{S}\right)  \overbrace{\le \lim _{n \rightarrow \infty} \Pr\left(\max _{x \in \mathcal{S}} \hat{Z}_x>c_{\alpha}^{\mathcal{S}}\right)}^{\text{since } \hat{t}_x \le \hat{Z}_x \ \text{for all } x \in \mathcal{S}}\underbrace{=\lim _{n \rightarrow \infty} \Pr\left(\hat{Z}^{* \mathcal{S}}>c_{\alpha}^{\mathcal{S}}\right)}_{\text{since } \max _{x \in \mathcal{S}} \hat{Z}_x=\hat{Z}^{* \mathcal{S}}}\overbrace{=\alpha}^{\text{definition of } c_{\alpha}^{\mathcal{S}}}.
\end{align}

Second, the crtical values have a monotonicity property.
Specifically,
\begin{equation*}
\{1,2, \dots, h-1\}=\hat{K}^{(0)}\supseteq \hat{K}^{(1)} \supseteq \cdots
\end{equation*}
because additional $H_{0x}$ can be rejected in every iteration, but once rejected they can never be un-rejected.
Combined with using the same way to get critical values, this implies the critical values are also monotonic over iterations:
\begin{equation*}
c_{\alpha}^{\hat{K}^{(0)}} \ge c_{\alpha}^{\hat{K}^{(1)}} \ge \cdots.
\end{equation*}
Specifically, recalling that $c_{\alpha}^{\hat{K}^{(i)}}$ is the $(1-\alpha)$-quantile of the asymptotic distribution of $\hat{Z}^{* \hat{K}^{(i)}}\equiv \max _{x \in \hat{K}^{(i)}} \hat{Z}_x$, since $\hat{K}^{(0)} \supseteq \hat{K}^{(1)} \supseteq \hat{K}^{(2)} \supseteq \cdots$, for any realization of the underlying $\hat{Z}_x$ then the corresponding $\hat{Z}^*$
realizations are ordered by
\begin{equation*}
\hat{Z}^{*\hat{K}^{(0)}} \ge \hat{Z}^{* \hat{K}^{(1)}} \ge  \cdots,
\end{equation*}
so
\begin{equation*}
(1-\alpha)\textrm{-quantile of }\hat{Z}^{*\hat{K}^{(0)}}
\ge (1-\alpha)\textrm{-quantile of }\hat{Z}^{*\hat{K}^{(1)}}
\ge \cdots,
\end{equation*}
that is,
\begin{equation*}
c_{\alpha}^{\hat{K}^{(0)}} \ge c_{\alpha}^{\hat{K}^{(1)}} \ge \cdots . 
\end{equation*}

According to Theorem 9.1.3 of \citet[\S9.1]{LehmannRomano2022text}, $\FWER \le \alpha$ because \cref{meth:stepdown} is based on critical values $c_{\alpha}^{\hat{K}^{(i)}}$ satisfying their monotonicity requirement and \cref{eq: oracle ineq} holds.
\end{proof}

\subsection{Proof of \texorpdfstring{\cref{prop:CR}}{Proposition \ref{prop:CR}}}

\begin{proof}
The true point $\vecf{d}=(d_1, \ldots, d_{h-1})$ is included in the CR if and only if $d_x \le \hat{d}_x - c_\beta\hat{s}_x$ for all $x \in\{1,\ldots, h-1\}$, which is equivalent to $\min_{x\in\{1,\ldots, h-1\}} \hat{Z}_x \ge c_\beta$.
Mathematically, 
\begin{align*}
\Pr(\text{CR covers the true point})
  &= \Pr(d_x \le \hat{d}_x - c_\beta\hat{s}_x \text{ for all } x \in\{1,\ldots, h-1\} )
\\&= \notag
\Pr(\min_{x\in\{1,\ldots, h-1\}} \hat{Z}_x \ge c_\beta ) \underbrace{\to 1-\beta}_{\text{by definition of } c_\beta}.
\qedhere
\end{align*}
\end{proof}

\subsection{Proof of \texorpdfstring{\cref{res:RSW}}{Theorem \ref{res:RSW}}}

\begin{proof}
Overall,
\begin{align}\notag
\FWER
  &\equiv \Pr(\text{reject any true } H_{0 x})
   = \Pr\bigl(\max _{x \in \mathcal{S}} \hat{t}_x>\hat{c}\bigr) \\ \notag
  &=\Pr\bigl[\bigl(\max _{x \in \mathcal{S}} \hat{t}_x>\hat{c} \text{ and } \hat{c} \ge \hat{c}^o\bigr) \text{ or }\bigl(\max _{x \in \mathcal{S}} \hat{t}_x>\hat{c} \text{ and } \hat{c}<\hat{c}^o\bigr)\bigr] \\
& \le \underbrace{\Pr\bigl(\max_{x \in \mathcal{S}} \hat{t}_x>\hat{c} \text{ and } \hat{c} \ge \hat{c}^o\bigr)}_{\text{left part}}+ \underbrace{\Pr\bigl(\max_{x \in \mathcal{S}} \hat{t}_x>\hat{c} \text{ and } \hat{c}<\hat{c}^o\bigr)}_{\text{right part}} .
\label{eqn:left-right}
\end{align}
The oracle critical value $\hat{c}^o$ is the $(1-\alpha+\beta)$-quantile of the $\max$ of the Gaussian random vector with mean vector $\vecf{d}/ \hat{\vecf{s}}$ and covariance matrix $\hat{\matf{\Sigma}}$ over $x\in \mathcal{S}$, i.e., the $(1-\alpha+\beta)$-quantile of $\max_{x \in \mathcal{S}}  \{ \NormDist ( \vecf{d}/ \hat{\vecf{s}},\hat{\matf{\Sigma}}) \}$.
Superscript $o$ is for ``oracle'' because it relies on the true $\vecf{d}$, while the ``hat'' is because it uses the estimated $\hat{\matf{\Sigma}}$ and $\hat{\vecf{s}}$.

To bound the ``right part'' in \cref{eqn:left-right},
\begin{align}\label{eqn:c-hat-o-inequality}
& \Pr\bigl(\max _{x \in \mathcal{S}} \hat{t}_x>\hat{c} \text { and } \hat{c}<\hat{c}^o\bigr)
 \le \Pr(\hat{c} < \hat{c}^o)\\
&\le \underbrace{\Pr(\text{CR from \cref{meth:CR} fails to cover the true point})
 \to \beta}_{\textrm{by \cref{prop:CR}}} .
\label{eqn:c-hat-CR-inequality}
\end{align}

The inequality in \cref{eqn:c-hat-o-inequality} holds because $\Pr(A \cap B) \le \Pr(B)$ (because $A\cap B\subseteq B$).

The inequality in \cref{eqn:c-hat-CR-inequality} holds because ``CR from \cref{meth:CR} fails to cover the true point'' is a necessary (but not sufficient) condition for $\hat{c} < \hat{c}^o$.
To prove that it is a necessary condition, we show that $\hat{c} < \hat{c}^o$ implies ``CR fails to cover.''
This can be done by equivalently showing that the contrapositive holds, i.e., that ``CR covers the true point'' implies $\hat{c} \ge \hat{c}^o$:
\begin{align*}%\notag
\hat{c}
  &\equiv (1-\alpha+\beta)\text{-quantile of }\max_{x \in\{1, \dots, h-1\}}  \{ \NormDist ( \min(\vecf{0}, \hat{\vecf{d}}{}^*)/\hat{\vecf{s}}, \hat{\matf{\Sigma}}) \} \\ %\notag
  & \ge (1-\alpha+\beta)\text{-quantile of } \max_{x \in \mathcal{S}}  \{ \NormDist ( \min(\vecf{0}, \hat{\vecf{d}}{}^*)/\hat{\vecf{s}}, \hat{\matf{\Sigma}}) \}  \\ %\notag 
  &\underbrace{\ge (1-\alpha+\beta)\text{-quantile of } \max_{x \in \mathcal{S}}  \{ \NormDist(\vecf{d}/\hat{\vecf{s}} , \hat{\matf{\Sigma}}) \}}_{\text{\llap{because }}\min\{0, \hat{d}_x^*\} \ge d_x \text{ for }x\in \mathcal{S}\text{ when CR covers the true point}}
% \\&
\equiv \hat{c}^o .
\end{align*}

To bound the ``left part'' in \cref{eqn:left-right},
\begin{align}
\label{eqn:left-c-hat-inequality}
\Pr(\max _{x \in \mathcal{S}} \hat{t}_x>\hat{c} \text{ and } \hat{c} \ge \hat{c}^o)
  &\le \Pr(\max _{x \in \mathcal{S}} \hat{t}_x \ge \hat{c}^o)
\\&\to \Pr(t^{\text{max}}\ge c^o )
   \label{eqn:left-t-max-limit}
\\&= \alpha - \beta ,
     \label{eqn:left-final}
\end{align}
where $c^0$ is defined below, and $t^\textrm{max}$ is defined below in \cref{eqn:tmax}.

The inequality in \cref{eqn:left-c-hat-inequality} holds because $\Pr(A\ge B \ge C) \le P(A\ge C)$, because $A\ge B \ge C \implies A\ge C$.

The convergence in \cref{eqn:left-t-max-limit} is determined by three factors.
First, $\hat{c}^o \pconv c^o$, where $c^o$ is defined as the the probability limit of $\hat{c}^o$.
Second, $\max_{x \in \mathcal{S}} \hat{t}_x \dconv t^{\text{max}}$, where $t^{\text{max}}$ has a non-degenerate distribution.
Third, \cref{lemma:convergence} allows the first two points to be combined into the limit shown.
These three points are detailed below.

First, $c^o$ is defined as the the probability limit of $\hat{c}^o$, $c^o \equiv \plimn \hat{c}^o$.
As defined above after \cref{eqn:left-right}, $\hat{c}^o$ is the $(1-\alpha+\beta)$-quantile of $ \max_{x \in \mathcal{S}}  \{ \NormDist ( \vecf{d}/\hat{\vecf{s}},\hat{\matf{\Sigma}}) \}$.
Hypothetically, if we modeled $d_x<0$ as asymptotically fixed, then the mean vector element $d_x/\hat{s}_x \pconv -\infty$ as $n\to\infty$ because $\hat{s}_x= \sqrt{\hat{V}^b_{(xx)} / n} \pconv 0$, because $\hat{V}^b_{(xx)} \pconv V^b_{(xx)}<\infty$.
Hence, if all $d_x<0$, then $c^o \equiv \plim \hat{c}^o= -\infty$.
This is not a helpful asymptotic approximation, so we use a more accurate approximation below.
With fixed $d_x$, only when $d_x=0$ does $d_x/\hat{s}_x$ not diverge, so $c^o \equiv \plim \hat{c}^o= (1-\alpha+\beta)$-quantile of $\max_{x: d_x=0} \{ \NormDist (\vecf{0},\matf{\Sigma}) \}$.

To get a more accurate approximation, let $d_x$ be a sequence (with subscript $n$ left implicit),
\begin{equation*}
d_x \equiv \gamma_x/\sqrt{n}, 
\end{equation*}
where $\gamma_x$ is a constant for each $x$.
Recalling the definitions of $\hat{t}_x$ from \cref{meth:RSW} and of $\hat{Z}_x$ from \cref{meth:plain-MTP},
\begin{align*}%\notag
\hat{t}_x
  &= \hat{Z}_x +\frac{d_x}{\hat{s}_x}
   = \hat{Z}_x +\underbrace{\frac{\gamma_x}{ \sqrt{n} \hat{s}_x}}_{d_x=\gamma_x/\sqrt{n}}
,\\
\sqrt{n} \hat{s}_x
  &= \sqrt{n} \sqrt{\frac{\hat{V}^b_{(xx)}}{n}}
   = \sqrt{\hat{V}^b_{(xx)}} \pconv \sqrt{V^b_{(xx)}} ,
\end{align*}
thus
\begin{align}
\label{eqn:tx-vs-Zx}
\hat{t}_x
  &= \hat{Z}_x + \frac{\gamma_x}{\sqrt{\hat{V}^b_{(xx)}}}
,\\
\label{eqn:t-asy-dist}
\hat{\vecf{t}}
  &\equiv (\hat{t}_1, \dots, \hat{t}_{h-1})
   = (\hat{Z}_1+\frac{\gamma_1}{\sqrt{\hat{V}^b_{(11)}}}, 
      \dots, 
      \hat{Z}_{h-1}+\frac{\gamma_{h-1}}{\sqrt{\hat{V}^b_{(h-1, h-1)}}}) 
\dconv \NormDist (\frac{\vecf{\gamma}}{\sqrt{ \diag(\matf{V}^b)}},\matf{\Sigma}), 
\end{align}
where $\vecf{\gamma}/\sqrt{ \diag(\matf{V}^b)}$ has element-wise division resulting in $\gamma_x/\sqrt{V^b_{(x,x)}}$ for the $x$th element.
Then, $c^o \equiv \plim \hat{c}^o=(1-\alpha+\beta)$-quantile of $\max_{x \in \mathcal{S}} \{ \NormDist (\vecf{\gamma}/ \sqrt{ \diag(\matf{V}^b)},\matf{\Sigma}) \}$.
Because the mean vector $\vecf{d} / \hat{\vecf{s}} \pconv \vecf{\gamma} / \sqrt{ \diag(\matf{V}^b) }$ and covariance matrix $\hat{\matf{\Sigma}} \pconv \matf{\Sigma}$, then $\NormDist ( \vecf{d}/ \hat{\vecf{s}},\hat{\matf{\Sigma}}) \dconv \NormDist (\vecf{\gamma}/ \sqrt{ \diag(\matf{V}^b) },\matf{\Sigma})$.
By the continuous mapping theorem,
\begin{equation*}
\max_{x \in \mathcal{S}}\{ \NormDist ( \vecf{d} / \hat{\vecf{s}},\hat{\matf{\Sigma}}) \}
\dconv
\max_{x \in \mathcal{S}} \{ \NormDist (\vecf{\gamma} / \sqrt{ \diag(\matf{V}^b) },\matf{\Sigma}) \} .
\end{equation*}

Second, using \cref{eqn:tx-vs-Zx,eqn:t-asy-dist},
\begin{equation}\label{eqn:tmax}
\max_{x \in \mathcal{S}} \hat{t}_x
\dconv
 t^{\text{max}}  \sim \max _{x \in \mathcal{S}}  \bigl\{ \NormDist (\vecf{\gamma} /\sqrt{ \diag(\matf{V}^b) },\matf{\Sigma}) \bigr\}
.
\end{equation}

Third, applying \cref{lemma:convergence}, $\Pr(\max _{x \in \mathcal{S}} \hat{t}_x \ge \hat{c}^o)  
\to \Pr(t^{\text{max}}\ge c^o )$ because $\max_{x \in \mathcal{S}} \hat{t}_x \dconv t^{\text{max}}$ and $\hat{c}^o \pconv c^o$.
In the end, $\Pr(t^{\text{max}}\ge c^o ) \to \alpha - \beta$ because
\begin{equation*}
t^{\text{max}}\sim  \max_{x \in \mathcal{S}}  \bigl\{ \NormDist (\vecf{\gamma} / \sqrt{\diag(\matf{V}^b)},\matf{\Sigma}) \bigr\}
\end{equation*}
and $c^o= [1-(\alpha-\beta)]$-quantile of $\max_{x \in \mathcal{S}}  \bigl\{ \NormDist (\vecf{\gamma}/ \sqrt{\diag(\matf{V}^b)},\matf{\Sigma}) \bigr\}$.

Overall, combining \cref{eqn:left-right,eqn:c-hat-CR-inequality,eqn:left-final}, the asymptotic FWER is bounded above by $(\alpha-\beta)+\beta = \alpha$, as stated in \cref{res:RSW}.
\end{proof}

\begin{lemma}\label{lemma:convergence}
If random variable sequence $X_n$ converges in distribution to a continuous random variable $X$, and random variable sequence $C_n$ converges in probability to constant $c$, then $\Pr(X_n \le C_n) \to \Pr(X \le c)$.
\end{lemma}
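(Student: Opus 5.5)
This is a Slutsky-type result, and the plan is a sandwich argument using continuity of the limit distribution at $c$. Fix $\epsilon>0$. Split on whether $C_n$ is within $\epsilon$ of $c$: the event $\{X_n\le C_n\}$ is contained in $\{X_n\le c+\epsilon\}\cup\{\abs{C_n-c}>\epsilon\}$. It also contains $\{X_n\le c-\epsilon\}\setminus\{\abs{C_n-c}>\epsilon\}$. Hence
\begin{equation*}
\Pr(X_n\le c-\epsilon)-\Pr(\abs{C_n-c}>\epsilon)
\le \Pr(X_n\le C_n)
\le \Pr(X_n\le c+\epsilon)+\Pr(\abs{C_n-c}>\epsilon) .
\end{equation*}

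Next, take $n\to\infty$. The terms $\Pr(\abs{C_n-c}>\epsilon)$ vanish because $C_n\pconv c$. Since $X$ is continuous, its CDF $F_X$ is continuous everywhere, so every point is a continuity point. Convergence in distribution therefore gives $\Pr(X_n\le c\pm\epsilon)\to F_X(c\pm\epsilon)$. This yields
\begin{equation*}
F_X(c-\epsilon)\le\liminf_{n\to\infty}\Pr(X_n\le C_n)\le\limsup_{n\to\infty}\Pr(X_n\le C_n)\le F_X(c+\epsilon) .
\end{equation*}

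Finally, let $\epsilon\downarrow0$. Continuity of $F_X$ at $c$ squeezes both bounds to $F_X(c)=\Pr(X\le c)$, which proves the claim.

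An alternative route is to note that $(X_n,C_n)\dconv(X,c)$ jointly by Slutsky's lemma. Then $X_n-C_n\dconv X-c$, whose CDF is continuous at $0$. I will present the sandwich version because it is self-contained.

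The only delicate step is the continuity assumption. Without it, $F_X$ could jump at $c$, and the two bounds would not meet. In the proof of \cref{res:RSW} the lemma is applied with $\ge$ in place of $\le$. This is harmless: $\Pr(X=c)=0$ under continuity, so $\Pr(X_n\ge C_n)=1-\Pr(X_n<C_n)$. The same sandwich, run with strict inequalities, gives the limit $\Pr(X\ge c)$.
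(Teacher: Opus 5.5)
Your proof is correct, but your main argument takes a different route from the paper's. The paper's proof is the ``alternative route'' you mention at the end: it invokes Slutsky's theorem to get $X_n-C_n\dconv X-c$, then evaluates the CDF of $X_n-C_n$ at $r=0$, which is a continuity point of the CDF of $X-c$. Your $\epsilon$-sandwich in effect reproves the relevant piece of Slutsky from first principles. It is longer but self-contained, and it shows exactly where continuity enters. Only continuity at $c$ is needed, since $\epsilon$ can be taken along values for which $c\pm\epsilon$ are continuity points, and continuity at $c$ closes the squeeze. The paper's version is shorter at the cost of treating Slutsky as a black box. Your closing remark on the $\ge$ form is a real addition. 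The paper applies the lemma to $\Pr(\max_{x\in\mathcal{S}}\hat{t}_x\ge\hat{c}^o)$ in the proof of \cref{res:RSW} without comment. Your observation that $\Pr(X=c)=0$, together with the strict-inequality sandwich, yields $\Pr(X\ge c)$ and so fills that small gap.
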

\begin{proof}
The core of the proof is to apply Slutsky's Theorem.
Let $X_n$ and $C_n$ be sequences of random variables.
By Slutsky's Theorem, if $X_n$ converges in distribution to a random element $X$ and $C_n$ converges in probability to a constant $c$, then $X_n+C_n \dconv X+c$, and similarly $X_n-C_n \dconv X - c$.
By the definition of convergence in distribution, this means
\begin{equation*}
\Pr(X_n - C_n \le r) \to \Pr(X-c \le r)
\end{equation*}
for any $r$ at which the CDF of $X-c$ is continuous, which was assumed to be all $r$.
At $r=0$,
\begin{equation*}
   \Pr(X_n \le C_n)
=  \Pr(X_n - C_n \le 0)
\to\Pr(X - c \le 0)
= \Pr(X\le c) .
\qedhere
\end{equation*}
\end{proof}

\subsection{Proof of \texorpdfstring{\cref{res:IVQR}}{Theorem \ref{res:IVQR}}}

\begin{proof}
The proof is similar to the proof in \cref{sec:pf-MTP}:
\begin{equation*}
\begin{aligned}
\FWER &\equiv\Pr(\text{reject any true }H_{0j})\\
 &=\Pr \left (\max_{j\colon H_{0j} \text{ true}}\hat{t}_j > c_\alpha \right)\\
 &\le \Pr \left(\max_{j\colon H_{0j}\text{ true}} \hat{Z}_j > c_\alpha \right)\\
 &\le \Pr \Bigl ( \overbrace{\max_{j\in\{1,\ldots,h\}} \hat{Z}_j}^{\equiv\hat{Z}^*}> c_\alpha \Bigr) \\
&\to \alpha .
\end{aligned}
\end{equation*}
The source of each equality and inequality is the same as in the proof in \cref{sec:pf-MTP}.
Only \cref{eqn:txhat-le-Zxhat,eqn:Zhat-asy-dist} are different due to the definitions of $\hat{t}_j$ and $\hat{Z}_j$ differing from before.
These differences are detailed below.

Similar to \cref{eqn:txhat-le-Zxhat}, the first inequality holds because if $H_{0j}$ is true, then $\beta_1 (u_j) \le 0$, so subtracting the true negative value (instead of zero) makes $\hat{Z}_j \ge \hat{t}_j$:
\begin{equation*}
\hat{t}_j \equiv \frac{\hat{\beta}_1(u_j)}{\hat{s}_j} \le \frac{\hat{\beta}_1(u_j)-\beta_1(u_j)}{\hat{s}_j}
\equiv \hat{Z}_j .
\end{equation*}

Similar to \cref{eqn:Zhat-asy-dist}, we need the joint asymptotic distribution of $\hat{\vecf{Z}}\equiv(\hat{Z}_1, \dots, \hat{Z}_{h})$, where 
$\hat{Z}_j\equiv [\hat{\beta}_1(u_j)-\beta_1(u_j)]/\hat{s}_j$ and $\hat{s}_j \equiv \sqrt{\hat{\Omega}_{jj} /n}$.
Gathering these,
\begin{equation*}
\begin{aligned}
\hat{\vecf{Z}}\equiv  
(\hat{Z}_1, \dots, \hat{Z}_{h})
=
\matf{\hat{A}}\sqrt{n}(\vecf{\hat{\beta}}_1-\vecf{\beta}_1) ,
\end{aligned}
\end{equation*}
where $\matf{\hat{A}}$ is a diagonal matrix with elements $\hat{A}_{(jj)}=1/(\sqrt{n}\hat{s}_j)=1/ \sqrt{\hat{\Omega}_{jj}}$, $j=1,\ldots,h$.
Thus,
\begin{equation}
\label{eqn:Zhat-asy-dist-IVQR}
\hat{\vecf{Z}} \dconv \NormDist (\vecf{0},\matf{\Sigma}),
\end{equation}
where $\matf{\Sigma}=\matf{A} \matf{\Omega}  \matf{A}'$.
\end{proof}

\end{appendices}

\end{document}